\theoremstyle{thmstyleone}%
\newtheorem{theorem}{Theorem}
\theoremstyle{thmstyletwo}%
\newtheorem{lemma}{Lemma}
\theoremstyle{thmstylethree}%
\begin{document}

\title[Bounds on tree distribution in number theory]{Bounds on tree distribution in number theory}
%
%

\author[1]{\fnm{Roberto} \sur{Conti}}\email{roberto.conti@sbai.uniroma1.it}
\equalcont{These authors contributed equally to this work.}

\author[2]{\fnm{Pierluigi} \sur{Contucci}}\email{pierluigi.contucci@unibo.it}
\equalcont{These authors contributed equally to this work.}

\author*[3]{\fnm{Vitalii} \sur{Iudelevich}}\email{vitaliiiudelevich@gmail.com}
\equalcont{These authors contributed equally to this work.}

\affil[1]{\orgdiv{Dipartimento di Scienze di Base e Applicate per l'Ingegneria}, \orgname{Sapienza Universit\`a di Roma}, \orgaddress{\street{via A. Scarpa 16}, \city{Rome}, \postcode{00161}, 
\country{Italy}}}

\affil*[2]{\orgdiv{Department of Mathematics}, \orgname{Alma Mater Studiorum - University of Bologna}, \orgaddress{\street{Piazza di Porta San Donato, 5}, \city{Bologna}, \postcode{40126}, \country{Italy}}}

\affil[3]{\orgdiv{Faculty of Mechanics and Mathematics}, \orgname{Lomonosov Moscow State University}, \orgaddress{\street{ Leninskie Gory, 1}, \city{Moscow}, \postcode{119991}, \country{Russia}}}


\abstract{The occurrence and the distribution of patterns of trees associated to natural numbers are investigated. Bounds from above and below are proven for certain natural quantities.}

\keywords{natural numbers, prime decomposition, prime tower factorization, rooted trees}



\maketitle

\section{Introduction}\label{sec1}


The fundamental theorem of arithmetic implies that it is always possible to associate to a  natural number a unique 
labeled rooted tree that keeps track of the prime decomposition of the number and, iteratively, of the prime decompositions of all the involved exponents, as illustrated in \cite{DG,Iud21,Iud22,CC,DKV}. Sometimes this structure has been referred to as {\it prime tower factorization}.
For instance, the decomposition of $320 = 2^6 \cdot 5= 2^{2 \cdot 3} \cdot 5$ leads to the labeled rooted tree depicted in Fig. 1 where all the labels are prime numbers.
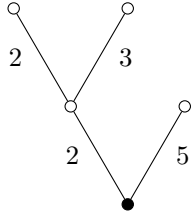
\begin{figure}
    \centering
    \begin{tikzpicture}
\tikzstyle{hollow node}=[circle,draw,inner sep=1.5]
\tikzstyle{solid node}=[circle,draw,inner sep=1.5,fill=black]
\node(0)[solid node]{}
child[grow=60]{node[hollow node]{} edge from parent node[xshift=10]{$5$}}
child[grow=120]{node[hollow node]{}  
child[grow=60]{node[hollow node]{} edge from parent node[xshift=10]{$3$}}
child[grow=120]{node[hollow node]{} edge from parent node[xshift=-10]{$2$}}
edge from parent node[xshift=-10]{$2$}
};
\end{tikzpicture}
    \caption{Labeled tree representation of the integer $320=2^{6}\cdot 5=2^{2\cdot 3}\cdot 5$}
    \label{fig:my_label}
\end{figure}
In a successive step, one may forget about labels and consider only rooted trees. Some motivations behind this choice have been discussed in \cite{CC} and include a generalization of the classical density problems for primes and primes configurations like, for instance, twin primes.
It is also interesting to distiguish trees according to their planar or nonplanar structure. We denote by $t(n)$ the planar rooted tree associated to $n$, and by $t^\#(n)$ its nonplanar version.
For instance, $t(12) \neq t(18)$, but $t^\#(12) = t^\#(18)$. In fact $12=2^2 \cdot 3$ and $18=2 \cdot 3^2$ provide two different planar trees with the same nonplanar structure.
The simplest nontrivial tree, namely a single edge attached to the root, corresponds to a prime number. The study of finite patterns of such rooted trees ({\it observables}, for short) therefore generalizes the problem of the distribution of (patterns of) prime numbers, a central topic in analytical number theory. 
In \cite{CC} it has been illustrated that even the simplest questions raised from this perspective are very challenging and often related to very famous 
results and conjectures in number theory.

Given the natural numbers $n_1 < n_2 < \cdots < n_k$,
the notation $O(n_1, n_2, \ldots, n_k)$ will represent the pattern formed by the tree $t(n_1)$, followed by $n_2 - n_1 + 1$ arbitrary trees, followed by $t(n_2)$, followed by $n_3 - n_2 + 1$ arbitrary trees and so on and so forth until reaching $t(n_k)$. Thus, in this pattern, $k$ trees are given (including the first from the left and the very last one on the right), while allowing any possible trees in the remaining intermediate slots. 
Some observables are unique, like $O(2,3)$ for instance. Of course, if $O(n_1,\ldots,n_k)$ is unique then the same is true for any $O(m_1,\ldots,m_h)$ when $\{n_1,\ldots,n_k\}$ is a subset of $\{m_1,\ldots,m_h\}$. This motivates the definition of {\it milestone} as any such pattern that is unique, i.e. it occurs only once, and it is minimal, meaning that it does not contain any ``smaller'' similar pattern that is also unique \cite{CC}. 
On the other hand, if an observable is not unique it becomes interesting to determine if it occurs infinitely many times or not, and in this last case, how many times it occurs. Perhaps the most famous open question of this sort is about $O(3,5)$, i.e. the infiniteness of the number of twin primes (see \cite{CC} for many other examples). 

It is immediately seen that every observable $O(2,p)$, with $p$ a 
prime number,
is a milestone. We say that any such milestone is {\it trivial}. 
Of course, the distribution of trivial milestones is described by the distribution of prime numbers. In \cite{CC} it was proven, among other things, the existence of infinitely many nontrivial milestones.
In order to do that, the authors associated to every integer $n$ the least natural number $\kappa_+(n)$ with the property that the observable $O(n,n+1,\ldots,n+\kappa_+(n))$ occurs only once. 
Indeed, such a $\kappa_+(n)$ exists, for all $n$, and thus the function $\kappa_+: {\mathbb N} \to {\mathbb N}$ is well-defined.\\
Moreover, due to the proof of \cite[Theorem 2.1]{CC}, for all functions $f$ that distinguish primes from any other number (for example one can take $f= \Omega$, the number of prime divisors with multiplicities) one can show that, for a given $n$, there is a number $k$ such that if we have $$(f(n), f(n+1), \ldots, f(n+k)) = (f(m), f(m+1), \ldots, f(m+k)),$$
for some $m$,
then $n=m$. Define the minimal such $k$ by $\kappa_f(n).$ It follows from the proof of \cite[Theorem 2.1]{CC} and the Siegel-Walfisz Theorem that $$\log \kappa_f(n) \leqslant c(\varepsilon)n^\varepsilon,$$
where $c(\varepsilon)>0$ is a constant depending only on $\varepsilon>0$.
In this paper we focus on the function $\kappa_+(n)$ and using elementary arguments we show that actually
$$\kappa_+(n) = O(n\log n),$$
which is better than the bound above.

\medskip
Indeed, the central topic of this paper is to provide a number of results that describe in various ways the occurrence of certain configurations of trees. Our main results include asymptotic bounds for the function $\kappa_+(n)$, both from above and from below. In particular, we show that the function $\kappa_+$ is unbounded, as predicted in \cite{CC}.
Along the way, we also exhibit a 
quite natural new family of nontrivial milestones (Section 2).
As it happens for the prime numbers, if an observable occurs infinitely many times then the distance between two consecutive occurrences is unbounded.
Moreover, for any given integer $k \geq 1$ there exists at least one observable of the form $O(n,n+1,\ldots,n+k)$ for some $n$ which occurs infinitely many times (Section 3).
Later we find that in every aritmetic progression $a + k q, k \in {\mathbb N}$ with $a$ and $q$ coprime any given planar tree appears infinitely many times, a version of Dirichlet theorem for trees (Section 4). 
%
%
We also deduce from an analogous result by Erd\H{o}s et al. \cite{EPS} stated for $\omega(n)$ an asymptotic estimate for sequences of consecutive integers
with equal nonplanar trees (Section 5).
Here, $\omega(n)$ denotes the number of prime divisors of $n$ (without multiplicity). 
Finally, we discuss how the repeated occurrence of an observable that is not a milestone implies 
specific bounds between the relevant parameters (Section 6).

\medskip
Concerning the methods, we use the Richert-Halberstam inequality instead of the method of contour integration when proving the existence of "typical" values of the additive function $E(n)$ (the number of edges of the corresponding tree $t(n)$). From here, we deduce that $\kappa_+(n)$ is unbounded.

To prove that there are configurations of trees that occur infinitely often we use the "lower" version of the Brun's sieve. 
Finally, when we study relations between parameters of same configurations of trees (the length of the configuration and the first elements) we use some information about remainder term in the problem
$$\sum_{n\leqslant x}{2^{\omega(n)}} = c_1 x\log x + c_2 x + R(x),$$
using technique from the paper \cite{Spa23}, and one lemma from the theory of combinatorics on words (which is quite unexpected).
To keep the paper more elementary we use a Mertens' style bound $R(x) = \sqrt{x}$ (see \cite{Sa} and \cite{GV}). 
Actually, it follows from the proof of Theorem \ref{MainTh} that if we supposed that
$R(x)\ll x^\delta,$
then for $k$ such that
$$O(m, m+1, \ldots, m+k) = O(n, n+1, \ldots, n+k),\ \ m<n,$$
one can obtain $k\ll n^{{(1+\delta)}/{2}}(\log n)^{O(1)}.$
In particular, it follows from the Riemann's hypothesis  (see Th.1 from \cite{Ba} and take $\delta = 3/8$) that 
$k\ll n^{{11}/{16}}(\log n)^{O(1)}.$
We believe that actually
$k\ll_\varepsilon n^\varepsilon$ for arbitrary small and fixed $\varepsilon >0.$




\bigskip 

Here we collect the main notations used throughout the paper: we denote by $\# A$ the cardinality of the (finite) set $A$. Let the variable $x$ take values in ${\mathbb N}$. We write
$f(x) = O(g(x))$ (or $f(x)\ll g(x)$) for $g(x) > 0$ if $f(x) = O(g(x))$, i.e. exists $C>0$ such that $|f(x)|\leqslant C g(x)$. 
We also write $f(x) = o(g(x))$ if $f(x)/g(x)\to 0$ when $x \to +\infty$.
Let $\textbf{E}(t)$ be the number of edges of a given tree $t$, and let $E(n) = \textbf{E}(t(n))$. 
We denote by $\mathcal{T}$ the set of all finite rooted planar trees, and by $[\mathcal{T}]$ the set of all finite rooted nonplanar trees.
We also need the following functions $\Omega$ and $\omega$, such that 
$$\Omega(n) = \sum_{p|n}\nu_p(n)$$
(the number of prime divisors of $n$ honoring their multiplicity),
where $\nu_p(n)$ is the exponent of the prime $p$ in the canonical decomposition of $n$, and
$$\omega(n) = \sum_{p | n} 1 $$
(the number of prime divisors of $n$ without multiplicity).

\section{Some bounds for $\kappa_+(n)$}

In this section we prove that the function $\kappa_+(n)$ is unbounded. Moreover, we prove the following
\begin{theorem}\label{Th1.1} Let $x$ be large enough  and $1\leqslant k \leqslant \log x/(5\log\log x)$. Then there are $m\geqslant x^{1/10}$ numbers
	$n_1, n_2, \ldots, n_m\leqslant x$ such that $\kappa_+(n_1), \kappa_+(n_2), \ldots, \kappa_+(n_m)\geqslant k.$ 
\end{theorem}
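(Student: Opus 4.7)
Note first that $\kappa_+(n)\ge k$ is equivalent to the tuple $(t(n),t(n+1),\ldots,t(n+k-1))$ failing to determine $n$ uniquely in $\mathbb{N}$: that is, there exists $m\neq n$ with $t(m+i)=t(n+i)$ for $i=0,\ldots,k-1$. The strategy is to restrict attention to $n\le x$ whose $k$ consecutive trees all have few edges, bound the number of possible tuples, and conclude by pigeonhole.

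The first step uses the Richert-Halberstam inequality (a Hardy-Ramanujan type bound for additive functions) applied to $E$. Since $E(p)=1$ for every prime and $\sum_{p^a\le x,\,a\ge 2}E(p^a)/p^a=O(1)$, the mean of $E$ over $n\le x$ is $\log\log x+O(1)$, and the expected Poisson-type tail estimate
\[
\#\{n\le x:E(n)>R\}\ll x/(\log x)^{h}
\]
holds with $R=3\log\log x$ and $h=3\log 3-2>1$. Setting $A=\{n\le x:E(n+i)\le R\text{ for all }0\le i<k\}$, a union bound together with the hypothesis $k\le \log x/(5\log\log x)$ yields $|A|\ge x-Ckx/(\log x)^{h}\ge x/2$ for $x$ large.

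For the pigeonhole, the number of planar rooted trees with at most $R$ edges is bounded by the Catalan count $\sum_{r\le R}C_r\le 4^{R+1}$. Hence for $n\in A$ the number of distinct tuples $(t(n),\ldots,t(n+k-1))$ is at most
\[
D\le 4^{(R+1)k}=x^{(6\log 2)/5+o(1)}\le x^{0.833},
\]
since $(R+1)k=3\log x/5+o(\log x)$ for our choices. Thus the number of $n\in A$ whose tuple agrees with that of some other $n'\in A$ is at least $|A|-D\ge x/2-x^{0.833}\ge x^{1/10}$ for $x$ large, and each such $n$ satisfies $\kappa_+(n)\ge k$.

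The technical core is the Richert-Halberstam step: since $E$ is additive but not strongly additive---$E(p^a)=1+E(a)$ depends on $a$---one must check that the prime-power corrections are small enough for the classical Poisson-type tail with exponent $h>1$ to survive at the threshold $R=3\log\log x$. Everything else (the Catalan tree count, the calibration of $R$ against $k$, and the final pigeonhole) is then routine.
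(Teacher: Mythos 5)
Your proof is correct and follows essentially the same route as the paper's: a Chernoff-type tail bound for $E(n)$ derived from the Richert--Halberstam inequality, a $4^{r}$ bound on the number of planar rooted trees with $r$ edges, and a pigeonhole on tuples of consecutive trees, with the same arithmetic $3\log 4/5<1$ driving the final exponent. The only (cosmetic) deviations are that you optimize the tilt at $z=3$ (exponent $3\log 3-2$) where the paper simply takes $z=2$ (exponent $3\log 2-1$, which already exceeds $1$ and suffices), and your ``at least $|A|-D$ elements share their tuple'' pigeonhole actually yields the stronger conclusion $m\gg x$, whereas the paper's ``one tuple repeats at least $J_k(x)/\#T$ times'' gives only $m\ge x^{1/10}$.
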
 
 We need the following lemma.
\begin{lemma}\label{L1.2} For $x$ large enough
	we have
	$$\#\{n \leqslant x: E(n)\geqslant 3 \log\log x\} \ll \dfrac{x}{(\log x)^\mu},$$
	where $\mu = 3\log 2-1.$
\end{lemma}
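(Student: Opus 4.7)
The overall strategy is to use that $E$ is additive, so that $2^{E(n)}$ is a non-negative multiplicative function, and then to combine a Markov-Chebyshev exponential moment bound with the Halberstam-Richert inequality. The target exponent $\mu = 3\log 2 - 1$ is exactly what one gets from the choice of parameter $z=2$ in such an estimate, which pins down the natural choice of parameters.

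Concretely, I would first prove that $E(n) \leqslant \Omega(n)$ for every $n \geqslant 1$. A strong induction using the recursion $E(n) = \omega(n) + \sum_{p \mid n} E(\nu_p(n))$ reduces this to the elementary inequality $a - \Omega(a) \geqslant 1$ for every $a \geqslant 1$, which holds because $\Omega(a) \leqslant \log_2 a \leqslant a - 1$ for $a \geqslant 2$ (and trivially for $a = 1$). Consequently the set in the lemma is contained in $\{n \leqslant x : \Omega(n) \geqslant 3\log\log x\}$, and it suffices to bound this larger set. Markov's inequality with weight $2^{\Omega(n)}$ gives
$$\#\{n\leqslant x : \Omega(n)\geqslant k\} \leqslant 2^{-k}\sum_{n\leqslant x}2^{\Omega(n)},$$
and the Halberstam-Richert inequality applied to the multiplicative function $g(n)=2^{\Omega(n)}$ (where $g(p)=2$ and the higher prime-power contributions to the relevant Euler product converge) yields $\sum_{n\leqslant x}2^{\Omega(n)}\ll x\log x$. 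Taking $k = \lceil 3\log\log x\rceil$ gives $2^{-k}\ll(\log x)^{-3\log 2}$, so the right-hand side becomes $\ll x(\log x)^{1-3\log 2} = x/(\log x)^\mu$, as desired.

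The main obstacle is not conceptual depth but bookkeeping: one must quote the correct version of the Halberstam-Richert inequality and verify its hypotheses with uniform constants. A slight variant, more directly in the spirit of the paper's emphasis on treating $E$ as an additive function in its own right, is to apply Halberstam-Richert to $h(n)=2^{E(n)}$ instead: one still has $h(p)=2$, while $h(p^a) = 2\cdot 2^{E(a)} \leqslant 2a$ (using $E(a)\leqslant \Omega(a)\leqslant \log_2 a$), which makes the prime-power tails of the Euler product manifestly summable. This route bypasses the reduction to $\Omega$ and yields the same exponent $3\log 2 - 1 = \mu$.
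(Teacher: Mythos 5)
Your ``variant'' is essentially the paper's proof, but your main route has a genuine error, and it is worth being precise about where, because the distinction is exactly the point of working with $E$ rather than $\Omega$ here. For $g(n) = 2^{\Omega(n)}$ the Euler factor at $p=2$ is $1 + 2/2 + 4/4 + 8/8 + \cdots$, which diverges: the prime-power tails of the Euler product do \emph{not} converge uniformly in $p$, contrary to your parenthetical remark. In fact the Dirichlet series $\sum_n 2^{\Omega(n)} n^{-s} = \prod_p (1-2p^{-s})^{-1}$ has a triple pole (not a double pole) at $s=1$, coming from $\zeta(s)^2$ times the $p=2$ factor $1/(1-2^{1-s}) \sim 1/((s-1)\log 2)$, and the true order of magnitude is $\sum_{n\leqslant x}2^{\Omega(n)} \asymp x(\log x)^2$, not $O(x\log x)$. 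Plugging this into Markov gives only the exponent $3\log 2 - 2 \approx 0.08$, which is too weak: the downstream application (Lemma~\ref{1.2}, where one multiplies by a $k$ that may be as large as $\log x$) needs the loss to be a power of $\log x$ strictly greater than $1$, i.e.\ precisely $\mu = 3\log 2 - 1 > 1$. So the reduction $E\leqslant\Omega$ followed by a $2^{\Omega}$-moment bound does not recover the lemma.

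Your variant avoids this for exactly the right reason and coincides with the argument in the paper. Because $E(p^a) = E(a)+1$ and $E(a)\leqslant\log_2 a$, one has $2^{E(p^a)}/p^a = 2\cdot 2^{E(a)}/p^a \leqslant 2a/p^a$, and $\sum_{a\geqslant 1} 2a/p^a$ is bounded uniformly in $p$ \emph{including} $p=2$, so each Euler factor is $(1-1/p)^{-2}(1+O(1/p^2))$ and the product over $p\leqslant x$ is $O((\log x)^2)$. Combined with Halberstam--Richert, $\sum_{n\leqslant x}2^{E(n)} \ll x\log x$, and the exponential Markov bound gives $\mu = 3\log 2 - 1$. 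In short: the inequality $E(n)\leqslant\Omega(n)$ is true but throws away the key structural fact that $E$ grows logarithmically on prime powers while $\Omega$ grows linearly; only the latter fact makes the $p=2$ factor tame, and that is why the proof must be run on $2^{E}$ and not on $2^{\Omega}$.
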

\begin{proof}
	We have
	$$\sum\limits_{\substack{n \leqslant x \\ E(n)\geqslant 3\log\log x}} 1 \leqslant 2^{-3\log\log x}\sum_{n \leqslant x}2^{E(n)}.$$
	It remains to prove that the last sum is at most $O(x\log x).$
	Note that $2^{E(\cdot)}$ is a multiplicative function. Then Richert - Halberstam inequality (see \cite{HT}, Theorem 01) implies that
	$$\sum_{n \leqslant x}2^{E(n)} \ll \dfrac{x}{\log x}\sum_{n\leqslant x}\dfrac{2^{E(n)}}{n}\leqslant \dfrac{x}{\log x}\prod_{p\leqslant x}\left( 1+\dfrac{2^{E(p)}}{p}+\dfrac{2^{E(p^2)}}{p^2}+\cdots\right), $$
	where $p$ runs through primes up to $x$. Further, we have
	\begin{multline*} 1+\dfrac{2^{E(p)}}{p}+\dfrac{2^{E(p^2)}}{p^2}+\cdots \\
		= \left(1-\dfrac{1}{p} \right)^{-2}\left\{\left( 1+\dfrac{2^{E(p)}}{p}+\dfrac{2^{E(p^2)}}{p^2}+\cdots\right)\left(1-\dfrac{1}{p} \right)^{2} \right\} = \left(1-\dfrac{1}{p} \right)^{-2}P(p), \ \ \text{say.}
	\end{multline*}
	Now we estimate the term $P(p)$. Since $E(p^{\nu}) = E(\nu)+1,$ it follows that
	\begin{equation*}
		P(p) = \left( 1+ \dfrac{2}{p}+\sum_{\nu\geqslant 2}\dfrac{2^{E(\nu)+1}}{p^\nu}\right)\left(1-\dfrac{2}{p}+\dfrac{1}{p^2} \right) = 1+\dfrac{1}{p^2}+O\left(\sum_{\nu\geqslant 3}\dfrac{2^{E(\nu)}}{p^\nu} \right).  
	\end{equation*}
	Since $E(\nu)\leqslant \Omega(\nu) \leqslant \log \nu/\log 2,$ we get
	$$P(p) = 1 + \dfrac{1}{p^2}+ O\left( \sum_{\nu\geqslant 3}\dfrac{\nu}{p^\nu}\right) = 1 + O\left(\dfrac{1}{p^2}\right). $$
	Hence, using Mertens' third theorem we obtain
	$$\prod_{p\leqslant x}\left( 1+\dfrac{2^{E(p)}}{p}+\dfrac{2^{E(p^2)}}{p^2}+\cdots\right)\leqslant \prod_{p\leqslant x}\left( 1- \dfrac{1}{p}\right)^{-2}\prod_{p} P(p)\ll (\log x)^2.$$
	Therefore, we get
	$$\sum_{n\leqslant x}2^{E(n)}\ll x\log x,$$
	which gives the lemma.
\end{proof}
For $x\geqslant 2$ and $k\geqslant 1$ we define $J_k(x)$ to be
	$$J_k(x) = \#\left\{n\leqslant x: E(n), E(n+1), \ldots, E(n+k) \leqslant 3 \log\log x\right\}.$$
  We prove the following
\begin{lemma}\label{1.2}
Uniformly on $1\leqslant k\leqslant \log x$ we have
 $$J_k(x)= x\left(1+o(1)\right),\ \ (x \to +\infty).$$
\end{lemma}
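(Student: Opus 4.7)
The plan is to bound the size of the complement: estimate the number of $n\leqslant x$ for which at least one of $E(n),E(n+1),\ldots,E(n+k)$ exceeds $3\log\log x$, and show this count is $o(x)$. Once this is established, $J_k(x)\leqslant x$ together with the exceptional-set bound will deliver $J_k(x)=x(1+o(1))$.

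First I would isolate the key estimate implicit in the proof of Lemma \ref{L1.2}: for any threshold $T>0$ and any $y\geqslant 2$,
$$\#\{m\leqslant y : E(m)\geqslant T\}\;\leqslant\; 2^{-T}\sum_{m\leqslant y}2^{E(m)}\;\ll\; \frac{y\log y}{2^{T}},$$
since the proof of Lemma \ref{L1.2} establishes $\sum_{m\leqslant y}2^{E(m)}\ll y\log y$. Next, for each fixed $j\in\{0,1,\ldots,k\}$, the set $\{n\leqslant x : E(n+j)\geqslant 3\log\log x\}$ is in bijection with a subset of $\{m\leqslant x+k : E(m)\geqslant 3\log\log x\}$. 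Because $k\leqslant \log x$, we have $x+k\leqslant 2x$ for large $x$, and the displayed bound with $y=x+k$ and $T=3\log\log x$ gives
$$\#\{n\leqslant x : E(n+j)\geqslant 3\log\log x\}\;\ll\; \frac{x\log x}{2^{3\log\log x}}\;=\;\frac{x}{(\log x)^{\mu}},$$
with $\mu=3\log 2-1$, uniformly in $j$.

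Now I would apply a union bound over the $k+1$ shifts:
$$x-J_k(x)\;\leqslant\;\sum_{j=0}^{k}\#\{n\leqslant x : E(n+j)\geqslant 3\log\log x\}\;\ll\;\frac{(k+1)x}{(\log x)^{\mu}}\;\ll\;\frac{x}{(\log x)^{\mu-1}},$$
using $k\leqslant \log x$ in the last step. Since $\mu-1=3\log 2-2>0$, the right-hand side is $o(x)$, and combining with the trivial inequality $J_k(x)\leqslant x$ gives $J_k(x)=x(1+o(1))$ uniformly for $1\leqslant k\leqslant \log x$.

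The argument involves no real obstacle beyond a careful bookkeeping of the threshold: the quantity $3\log\log x$ in the definition of $J_k(x)$ depends on $x$ rather than on $x+k$, but this mismatch is harmless because the Richert--Halberstam-based bound in the proof of Lemma \ref{L1.2} works for any threshold applied to any range $y\ll x$. The uniformity in $k$ then survives exactly because the exponent $\mu=3\log 2-1$ exceeds $1$, leaving a positive saving $(\log x)^{\mu-1}$ after summing over the $k+1\leqslant \log x+1$ translates.
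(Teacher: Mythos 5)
Your proposal is correct and follows essentially the same route as the paper's proof: bound $J_k(x)\leqslant x$ trivially, apply a union bound over the $k+1$ shifts, invoke the estimate underlying Lemma \ref{L1.2} to control each exceptional set, and then use $k\leqslant\log x$ together with $\mu=3\log 2-1>1$ to conclude that the complement is $o(x)$. The only cosmetic difference is that you re-derive the bound in the slightly more general form $\#\{m\leqslant y:E(m)\geqslant T\}\ll y\log y\,2^{-T}$ to handle the range $x+k$ directly, whereas the paper applies Lemma \ref{L1.2} at $y=x$ verbatim and absorbs the tail $x<n\leqslant x+k$ into an extra $O(k^2)$ error term; both bookkeeping choices yield the same $x\bigl(1-O((\log x)^{-(3\log 2-2)})\bigr)$.
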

\begin{proof}
Indeed, on the one hand
$J_k(x)\leqslant x.$
	On the other hand, using Lemma \ref{L1.2} we obtain
	\begin{multline*}
		J_k(x) = \sum_{n\leqslant x} 1 - \sum\limits_{\substack{n\leqslant x \\ \exists j\ (0\leqslant j\leqslant k): \\ E(n+j) > 3\log\log x}} 1 \geqslant \lfloor x\rfloor -
 \sum_{j=0}^k\sum\limits_{\substack{n\leqslant x \\ E(n+j)>3\log\log x}}1 \\
 \geqslant x - O\left(1 + k\sum\limits_{\substack{n\leqslant x+k\\ E(n)>3\log\log x}}1 \right) = x - O\left(1 + k\sum\limits_{\substack{n\leqslant x\\ E(n)>3\log\log x}}1 +k^2 \right) \\
		= x - O\left(\dfrac{k x}{(\log x)^{3\log2-1}}\right) = x\left(1-O\left( \dfrac{1}{(\log x)^{3\log 2 - 2}} \right) \right). 
	\end{multline*}
	This concludes the proof.
 \end{proof}
	Now we are ready to prove Theorem \ref{Th1.1}.
	
 \begin{proof}
 Set 
	$$T(x) = \#\left\{t(n) \in \mathcal{T}: n\leqslant x, E(n)\leqslant 3\log\log x\right\}.$$
	Since each tree $t\in \mathcal{T}$ can be encoded as a sequence of $0$ and $1$ of size $2\textbf{E}(t)$, we conclude that
	$$\#\left\{t\in \mathcal{T}: \textbf{E}(t) = \nu \right\}  \leqslant 4^\nu.$$
	Hence, we find that
	$$T(x)\leqslant \sum_{\nu\leqslant 3\log\log x} 4^\nu \ll (\log x)^{3\log 4}.$$
	Put 
	$$T = \left\{(t(n), \ldots, t(n+k)) \in \mathcal{T}^{k+1}: n\leqslant x; E(n+j)\leqslant 3\log\log x, 1\leqslant j\leqslant k\right\}.$$
	It is easy to see that
	$$\# T\leqslant T(x+k)^{k+1}\ll (\log x)^{3\log 4\,(k+1)(1+o(1))}.$$
	Lemma \ref{1.2} and the pigeonhole principle imply that there is at least one tuple $\overline{t} \in T$ that occurs $m\geqslant \lceil J_k(x)/\#T\rceil$ times. Since $k\leqslant \log x/(5 \log\log x)$, it follows that there are
	\begin{multline*}
 m\geqslant \dfrac{J_k(x)}{\# T}\geqslant \dfrac{x(1+o(1))}{(\log x)^{3\log 4 (k+1)(1+o(1))}}\\ = \exp{\left((1+o(1))(\log x-3\log 4\,(\log\log x) k) \right)}\\
 \geqslant \exp\left(\log x\left(1-\frac{3\log 4}{5}\right)\right)
 \geqslant \exp(\log x/10)
 \end{multline*}
	numbers $n_1, n_2, \ldots, n_m \leqslant x,$ for which 
	$$\overline{t} = (t(n_1), t(n_1+1), \ldots, t(n_1+k)) = \cdots = (t(n_m), t(n_m+1), \ldots, t(n_m+k)).$$
	Hence, we obtain $\kappa_+(n_1), \kappa_{+}(n_2), \ldots, \kappa_+(n_m)\geqslant k.$ The claim follows.

\end{proof}
Note that Theorem \ref{Th1.1} implies that there are infinitely many integers $n$ such that
$$\kappa_+(n)\gg \dfrac{\log n}{\log\log n}.$$
In particular, the function $\kappa_+(n)$ is unbounded.
 Now we give some upper bound for $\kappa_+(n)$. Namely, we prove the following
 \begin{theorem}\label{Th1.4}
     For $n\to +\infty$ we have
     $$\kappa_+(n)\leqslant 2n\log n(1+o(1)).$$
 \end{theorem}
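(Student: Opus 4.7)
The goal is to exhibit a $k = 2n\log n\,(1+o(1))$ for which $O(n,n+1,\ldots,n+k)$ occurs only once; by definition of $\kappa_+$ this gives the desired inequality.

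The natural elementary approach is to locate a sufficiently distinctive integer $N$ inside the interval $[n,n+k]$. A promising candidate is $N = nq$, where $q$ is the smallest prime not dividing $n$: since $\omega(n)\ll \log n/\log\log n$, the prime $q$ is bounded by roughly the $(\omega(n)+1)$-th prime, giving $q\ll \log n$. Hence $N=nq\ll n\log n$, placing $N$ comfortably inside $[n,n+k]$ for $n$ large. Because $\gcd(n,q)=1$, the tree $t(N)$ is obtained from $t(n)$ by attaching one extra leaf to the root (the new prime $q$ with exponent $1$).

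Suppose for contradiction that some $m\neq n$ shares the pattern, so $t(m+j)=t(n+j)$ for all $0\leqslant j\leqslant k$. The tree constraint at $j=0$ forces $t(m)=t(n)$, and at the distinguished position $j^{\ast}=N-n=n(q-1)$ it forces $t(m+j^{\ast})=t(nq)$, i.e.\ $m+j^{\ast}$ differs from $m$ by exactly one extra prime-leaf. Combining this structural constraint with the matching at the $\sim k/\log n = 2n$ prime positions inside $[n,n+k]$ (at each such position $m+j$ must also be prime, by the prime number theorem applied to the interval), one aims to conclude elementarily that $m=n$. The numerical value $2n\log n$ arises precisely because the interval of this length contains $\sim 2n$ primes and is simultaneously long enough to accommodate the distinguished $N=nq$ for every admissible $q\ll \log n$.

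\textbf{Main obstacle.} The chief difficulty is that tree equality $t(N)=t(M)$ is invariant under relabeling of primes, so each individual constraint admits many shifted solutions. Uniqueness must therefore emerge by combining the constraints coming from the $\sim 2n$ prime positions together with the structural constraint at $j^{\ast}$, and propagating them to the identity $m=n$ rather than merely restricting $m$ to some sparse set. Because the authors advertise this bound as obtained by elementary arguments, I expect the closing step to rely on a careful choice of distinguished integer and a direct divisibility/counting argument, rather than the deeper sieve-theoretic machinery one would need to approach the conjectured sharper bound $\kappa_+(n)\ll_\varepsilon n^\varepsilon$.
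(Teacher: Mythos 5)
Your proposal contains a genuine gap at the decisive step: you identify a distinguished integer $N = nq$ inside the window, but you never actually show that matching trees across $[n,n+k]$ and $[m,m+k]$ forces $m=n$. The structural constraint at $j^\ast=N-n$ only tells you that $t(m+j^\ast)$ equals $t(nq)$, i.e.\ has the same abstract shape (one extra leaf on the root of $t(n)$); it does not force $m+j^\ast$ to factor as $m\cdot(\text{prime})$, nor does it tie the labels of $m+j^\ast$ to those of $m$, since tree equality forgets labels entirely. Likewise the ``$\sim 2n$ prime positions'' give the weak information that $m+j$ is prime whenever $n+j$ is, but prime gaps are far too irregular for that alone to pin down $m$, and you give no mechanism for combining these constraints into $m=n$. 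You are candid that this is the ``main obstacle,'' but resolving it is the whole theorem; what you have is a plan, not a proof.

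The paper's actual argument is shorter and rests on a different rigidity. It first proves the trivial shift inequality $\kappa_+(n)\leqslant\kappa_+(n+m)+m$, then proves that for the primorial $q_k=p_1\cdots p_k$ the observable $O(q_k,q_k+1,\ldots,2q_k-1)$ is unique. The uniqueness there is clean: on $[q_k,2q_k)$ one has $\omega\leqslant k$ (since $2q_k-1<q_{k+1}$), any value $\omega=k$ forces square-freeness (else one exceeds $2q_k-1$), and the unique multiple of $q_k$ in any window of length $q_k$ must then equal $q_k$ itself because $q_k$ is the \emph{minimal} integer with $\omega=k$. Choosing $k$ with $q_k\leqslant n<q_{k+1}$ and applying the shift lemma gives $\kappa_+(n)\leqslant\kappa_+(q_{k+1})+q_{k+1}-n\leqslant 2q_{k+1}\leqslant 2n\log n(1+o(1))$, using the Prime Number Theorem to estimate $p_{k+1}$. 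Notice that the shift \emph{forward} to the next primorial is what makes the argument close: you do not need a distinguished number inside $[n,n+k]$ near $n$, only inside $[q_{k+1},2q_{k+1})$, where the minimality of the primorial supplies the rigidity your sketch is missing.
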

We need the following two lemmas.
\begin{lemma}
	For all $n, m \geqslant 1$ we have
	$$\kappa_+(n)\leqslant \kappa_+(n+m)+m.$$
	\end{lemma}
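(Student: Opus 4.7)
The plan is to argue by contradiction, exploiting the fact that uniqueness of a pattern is preserved when we glue more trees onto the left of it. Write $K = \kappa_+(n+m)$, so by definition the tuple
\begin{equation*}
\bigl(t(n+m),\, t(n+m+1),\, \ldots,\, t(n+m+K)\bigr)
\end{equation*}
occurs exactly once among all runs of $K+1$ consecutive trees. I want to show that the longer tuple
\begin{equation*}
\bigl(t(n),\, t(n+1),\, \ldots,\, t(n+m+K)\bigr),
\end{equation*}
of length $m+K+1$, is also unique; this will yield $\kappa_+(n) \leqslant m+K$, which is exactly the claimed inequality.

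Suppose, for contradiction, that the longer tuple appears starting from some index $n' \ne n$, i.e.\ $t(n'+j) = t(n+j)$ for every $j = 0, 1, \ldots, m+K$. Restricting attention to the indices $j = m, m+1, \ldots, m+K$, one obtains
\begin{equation*}
\bigl(t(n'+m),\, t(n'+m+1),\, \ldots,\, t(n'+m+K)\bigr) = \bigl(t(n+m),\, t(n+m+1),\, \ldots,\, t(n+m+K)\bigr).
\end{equation*}
By the uniqueness of the right-hand pattern this forces $n'+m = n+m$, i.e.\ $n' = n$, contradicting the choice of $n'$. Hence the extended tuple is unique, and by minimality of $\kappa_+(n)$ we conclude $\kappa_+(n) \leqslant m+K = \kappa_+(n+m)+m$.

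There is no real obstacle: the proof is purely a bookkeeping argument about how uniqueness of a block lifts to uniqueness of a block containing it, together with a direct appeal to the definition of $\kappa_+$. The only point requiring a touch of care is making sure that we quote the definition correctly — that $\kappa_+(n+m)$ witnesses uniqueness of the pattern starting \emph{at} $n+m$, not merely within a finite window — so that the forced equality $n' + m = n + m$ really holds.
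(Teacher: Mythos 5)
Your proof is correct, and it is exactly the argument the paper has in mind (the paper simply states that the lemma ``immediately follows from the definition'' without spelling out the details). You correctly unpack the definition: the unique block of length $\kappa_+(n+m)+1$ starting at $n+m$ sits inside the block of length $m+\kappa_+(n+m)+1$ starting at $n$, so any other occurrence of the longer block would force another occurrence of the shorter one, contradicting its uniqueness; minimality of $\kappa_+(n)$ then gives the bound.
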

\begin{proof}
This immediately follows from the definition of the function $\kappa_+(n)$.
\end{proof}
\begin{lemma}
	Let $k\geqslant 2$ be an integer, and let $q_k = p_1 p_2\cdots p_k$ be the product of the first $k$ primes. Then
	$$|O(q_k, q_k+1, \ldots, 2q_k-1)| = 1.$$
\end{lemma}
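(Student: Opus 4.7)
The plan is to exploit the fact that $q_k$ is the unique multiple of $q_k$ inside any window of $q_k$ consecutive integers, and to show that for every $m\geqslant 2$ no realizer of the planar tree $t(mq_k)$ can lie inside the window $[q_k,2q_k-1]$. First I would match the observable at its leftmost position: since $q_k$ is squarefree with $\omega(q_k)=k$, the tree $t(q_k)$ is the star with $k$ leaves, so $t(n)=t(q_k)$ forces $n$ to be squarefree with exactly $k$ prime divisors; in particular $n\geqslant p_1\cdots p_k=q_k$. If $n=q_k$ we are done, so I would assume $n>q_k$ and aim at a contradiction.

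Next I would locate the unique multiple of $q_k$ inside $\{n,n+1,\ldots,n+q_k-1\}$, writing it as $mq_k=n+j_0$ with $0\leqslant j_0\leqslant q_k-1$; the hypothesis $n>q_k$ forces $m\geqslant 2$. Matching at position $j_0$ gives $t(mq_k)=t(q_k+j_0)$ with $q_k+j_0\in\{q_k,\ldots,2q_k-1\}$. The whole argument thus reduces to the following claim, which is the substance of the proof: \emph{for every $m\geqslant 2$, every positive integer $M$ with $t(M)=t(mq_k)$ satisfies $M\geqslant 2q_k$}. This immediately contradicts $q_k+j_0\leqslant 2q_k-1$ and completes the proof.

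I would establish the claim by separating two cases. If $m$ has a prime divisor strictly greater than $p_k$, then $\omega(mq_k)\geqslant k+1$, so any realizer $M$ is a product of at least $k+1$ distinct primes and hence $M\geqslant q_{k+1}=q_k p_{k+1}\geqslant 3q_k$. Otherwise write $mq_k=\prod_{i=1}^k p_i^{e_i}$ with $e_i=1+\nu_{p_i}(m)$; because $m\geqslant 2$ there is some index $i_0$ with $e_{i_0}\geqslant 2$. Any realizer takes the form $M=\prod_{i=1}^k r_i^{f_i}$ with primes $r_1<\cdots<r_k$, and the planar-tree matching at the root demands $t(f_i)=t(e_i)$ for every $i$. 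Since $t(1)$ is realized only by $1$, this forces $f_{i_0}\geqslant 2$; combined with $r_i\geqslant p_i$ this yields $M\geqslant q_k\cdot p_{i_0}\geqslant 2q_k$.

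The main obstacle is this second case: distinct exponents can share the same planar tree (for instance any two primes both yield the single-edge tree), so one cannot simply substitute $f_i=e_i$. Everything hinges on arguing through the minimal realizer of each subtree and then harvesting the one extra factor $p_{i_0}\geqslant p_1=2$ above $q_k$; once this small piece of bookkeeping is in place, the proof closes.
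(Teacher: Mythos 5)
Your proof is correct, and it is essentially the dual of the paper's argument. Both proofs rely on the same two inequalities. First, $2q_k-1 < q_{k+1}$ (since $q_{k+1} = p_{k+1} q_k \geqslant 3q_k$), which rules out $\omega$ exceeding $k$ in the window. Second, $p_1^2 p_2\cdots p_k = 2q_k > 2q_k-1$, which rules out any non-squarefree number with $\omega = k$ in the window. The paper works from the \emph{original} window: it squeezes $\omega(n+j)=\omega(q_k+j)=k$, invokes the squarefree observation to conclude $q_k+j$ (hence $n+j$) is squarefree, and then forces $n+j=q_k$. You instead work from the \emph{candidate} window: assuming $n>q_k$ you write the unique multiple of $q_k$ in the window as $n+j_0 = mq_k$ with $m\geqslant 2$, and show that every realizer of $t(mq_k)$ is at least $2q_k$, contradicting $q_k+j_0 \leqslant 2q_k-1$. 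Your Case (i) ($m$ has a prime factor $>p_k$) corresponds to the paper's bound $2q_k-1<q_{k+1}$, and your Case (ii) (all prime factors of $m$ are $\leqslant p_k$, giving some exponent $e_{i_0}\geqslant 2$) is exactly the bound $p_1^2\cdots p_k = 2q_k$; the step ``$t(1)$ is realized only by $1$'' is the correct way to transfer the positive exponent to the realizer. The paper's route is a touch shorter because it never needs to introduce the realizer-minimization language, but the two are logically equivalent and use identical arithmetic facts.
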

\begin{proof}
	Since $2q_k-1<q_{k+1}$, it follows that $\omega(q_k+j)\leqslant k$ for $0\leqslant j\leqslant q_k-1.$ Note that if $\omega(q_k+j) = k$, then $q_k+j$ is square-free. Indeed, assume the converse. Then $$q_k+j\geqslant p_1^2p_2\cdots p_k>2q_k-1;$$
	this is a contadiction.
	It remains to prove that if
	$$O(q_k, q_k+1, \ldots, 2q_k-1) = O(n, n+1, \ldots, n+q_k-1),$$
	then $n=q_k.$
	Since $r = q_k$ is the smallest number with the condition $\omega(r)=k,$ we have $n\geqslant q_k$. Further, there is exactly one number among the integers $\left\{n, n+1, \ldots, n+q_k-1\right\}$ such that it is divisible by $q_k$. Suppose that 
	$$n+j\equiv 0\pmod{q_k}$$
	for some $j\geqslant 0.$ Then $\omega(n+j)=\omega(q_k+j) = k.$ Hence, $n+j$ is square-free. Then $n+j = q_k$, which is possible if and only if $j=0$ and $n=q_k.$ The claim follows.
\end{proof}
Now we prove Theorem \ref{Th1.4}.

\begin{proof}
Choose $k$ such that $q_k\leqslant n <q_{k+1}.$ The Prime Number Theorem implies that $q_k = \exp\left(p_k(1+o(1))\right) $. Hence, $p_k\leqslant \log n(1+o(1))$, and $$p_{k+1} = p_k(1+o(1))\leqslant \log n(1+o(1)).$$
Using the above two lemmas we obtain 
$$\kappa_{+}(n)\leqslant \kappa_+(q_{k+1})+q_{k+1}-n\leqslant  2q_{k+1}=2q_kp_{k+1}\leqslant 2n\log n(1+o(1)).$$

\end{proof}
\section{The distance between same trees}

We prove that the distance between the same observed configurations of trees can be arbitrarily large. We need the following lemma.
\begin{lemma}\label{lem 1.4}
	Let $t \in [\mathcal{T}]$ be some tree, and let $\{n_l\}_{l=1}^{+\infty}$ be the sequence of consecutive numbers for which $t(n_l) = t$. Suppose that $n_1\leqslant \frac{9}{10}n_k$ for some $k\geqslant 2$, then the following inequality holds
	$$\max_{l\leqslant k-1}\left( n_{l+1}-n_{l}\right) \geqslant \dfrac{c_0\log n_k (\omega(n_1)-1)!}{(\log\log n_k+c_1)^{\omega(n_1)-1}},$$
	where $c_0, c_1>0$ are some absolute constants.
	\end{lemma}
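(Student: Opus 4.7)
The plan is to reduce the statement to a counting estimate via pigeonhole. The key structural observation is that $\omega(n)$ is a tree invariant: it equals the number of edges incident to the root of $t(n)$, since those edges correspond bijectively to the distinct prime divisors of $n$. Hence every $n_l$ satisfies $\omega(n_l) = \omega(n_1) =: \omega$, regardless of whether we compare planar or nonplanar trees.

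Next, I would split the interval $[n_1, n_k]$ into its $k-1$ gaps $n_{l+1}-n_l$. Since these gaps sum to $n_k-n_1$, and the hypothesis $n_1 \leqslant \frac{9}{10}n_k$ gives $n_k-n_1 \geqslant n_k/10$, an averaging argument yields
\[
\max_{l \leqslant k-1}(n_{l+1}-n_l) \;\geqslant\; \frac{n_k-n_1}{k-1} \;\geqslant\; \frac{n_k}{10(k-1)}.
\]
The remaining task is therefore to bound $k-1$ from above.

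Since $n_1<n_2<\cdots<n_k$ are all $\leqslant n_k$ and share the common value $\omega(n_l)=\omega$, we have
\[
k \;\leqslant\; \pi_\omega(n_k) \;:=\; \#\{n\leqslant n_k : \omega(n)=\omega\}.
\]
The Hardy--Ramanujan inequality provides absolute constants $C,c>0$ such that
\[
\pi_\omega(x) \;\leqslant\; \frac{C\, x\, (\log\log x + c)^{\omega-1}}{(\omega-1)!\,\log x}
\]
for every $x\geqslant 3$ and every $\omega\geqslant 1$. Substituting this into the previous inequality gives
\[
\max_{l \leqslant k-1}(n_{l+1}-n_l) \;\geqslant\; \frac{(\omega-1)!\,\log n_k}{10\,C\,(\log\log n_k + c)^{\omega-1}},
\]
so that taking $c_0 = 1/(10C)$ and $c_1 = c$ reproduces the stated bound.

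The whole argument is essentially an average-gap computation: there is no substantive obstacle, since the tree hypothesis collapses to the single numerical invariant $\omega(n_1)$ and the rest is pigeonhole combined with a uniform upper bound on $\pi_\omega(x)$. The only point to be careful about is quoting the Hardy--Ramanujan inequality in its uniform-in-$\omega$ form, which has precisely the $(\log\log x + c)^{\omega-1}/(\omega-1)!$ shape required to match the right-hand side of the lemma; one could equivalently invoke a Landau/Selberg--Delange asymptotic, but the Hardy--Ramanujan bound is preferable because it is a genuine inequality valid for all $x$ and all $\omega$ simultaneously.
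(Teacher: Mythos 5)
Your proof is correct and takes essentially the same route as the paper: observe that $\omega$ is a tree invariant, bound $k$ via the Hardy--Ramanujan inequality applied to $\{n\leqslant n_k:\omega(n)=\omega(n_1)\}$, and then average the $k-1$ gaps over the interval $[n_1,n_k]$ of length $\geqslant n_k/10$. The only cosmetic difference is that you divide by $k-1$ while the paper bounds $\sum_{l\leqslant k-1}(n_{l+1}-n_l)\leqslant k\max_l(n_{l+1}-n_l)$; the two are equivalent up to constants.
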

\begin{proof}
Using the Hardy-Ramanujan inequality (see \cite{HR17}), we obtain
$$k = \sum\limits_{\substack{n\leqslant n_k \\ t(n) = t}}1\leqslant \sum\limits_{\substack{n\leqslant n_k \\ \omega(n) = \omega(n_1)}}1 \leqslant \dfrac{c_2 n_k (\log\log n_k+c_1)^{\omega(n_1)-1}}{\log n_k (\omega(n_1)-1)!},$$
where $c_1, c_2>0$ are some absolute constants.
Hence,
\begin{multline*}
	0.1n_k\leqslant n_k - n_1 = \sum_{l\leqslant k-1}(n_{l+1}-n_l)\leqslant k \max_{l\leqslant k-1}\left( n_{l+1}-n_{l}\right)\\
	\leqslant   \dfrac{c_2 n_k (\log\log n_k+c_1)^{\omega(n_1)-1}}{\log n_k (\omega(n_1)-1)!}\max_{l\leqslant k-1}\left( n_{l+1}-n_{l}\right).
	\end{multline*}
Then, we have
$$\max_{l\leqslant k-1}\left( n_{l+1}-n_{l}\right) \geqslant \dfrac{\log n_k (\omega(n_1)-1)!}{10c_2(\log\log n_k+c_1)^{\omega(n_1)-1}}.$$
The proof of the lemma is complete.
\end{proof}

\begin{theorem}
	Suppose that 
	$$|O(n_1, n_1+h_1, \ldots, n_1+h_k)| = +\infty,$$
	where $n_1\geqslant 1$, and $1\leqslant h_1< h_2<\cdots <h_k.$ Let $\{n_l\}_{l=1}^{+\infty}$ be the sequence of consecutive numbers for which
	\begin{multline}O(n_1, n_1+h_1,\ldots, n_1+h_k) = O(n_2,n_2+h_1, \ldots,n_2+h_k) = \cdots\\
		 = O(n_l, n_l+h_1,  \ldots,  n_l+h_k) = \cdots,
		\end{multline}
	then we have
	\begin{equation}\label{limsup}
		\limsup\limits_{l \to +\infty}\left(n_{l+1}-n_l -h_k\right) = +\infty.
		\end{equation}
\end{theorem}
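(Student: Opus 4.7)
The plan is to proceed by contradiction, using a simple density comparison between the putative spacing of $\{n_l\}$ and the known sparseness of integers with a fixed value of $\omega$. Suppose the $\limsup$ in \eqref{limsup} is finite; then there exist constants $M>0$ and $L_0\geqslant 1$ such that
$$n_{l+1}-n_l\;\leqslant\; h_k+M \qquad\text{for all } l\geqslant L_0.$$
Telescoping this inequality yields, for every sufficiently large $x$,
$$\#\{l\geqslant 1 : n_l\leqslant x\}\;\geqslant\; \frac{x-n_{L_0}}{h_k+M}\;\gg\; x,$$
so under the contrary assumption the sequence $\{n_l\}_{l\geqslant 1}$ would have positive lower density in $\mathbb{N}$.

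Next I would exploit the fact that equality of observables forces, in particular, $t(n_l)=t(n_1)$ for every $l$, and hence $\omega(n_l)=\omega(n_1)=:\omega_0$, a \emph{fixed} positive integer (the hypothesis $|O(n_1,\ldots,n_1+h_k)|=+\infty$ rules out $n_1=1$, since $t(1)$ occurs only at $n=1$). Applying the Hardy--Ramanujan inequality exactly as in the proof of Lemma~\ref{lem 1.4}, I obtain
$$\#\{n\leqslant x : \omega(n)=\omega_0\}\;\leqslant\; \frac{c_2\,x\,(\log\log x + c_1)^{\omega_0-1}}{\log x}\;=\;o(x),$$
as $x\to+\infty$, because $\omega_0$ is a constant. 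Since $\{n_l\}\subseteq\{n : \omega(n)=\omega_0\}$, this forces $\#\{l : n_l\leqslant x\}=o(x)$, in direct contradiction with the linear lower bound of the previous step, and the theorem follows.

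I do not anticipate a genuine obstacle here: the proof is essentially a one-line density argument built on a bound already used in the paper. The only small point worth double-checking is that a single tree equality $t(n_l)=t(n_1)$ really pins $\omega(n_l)$ to a constant, which is immediate since the number of children of the root of $t(n)$ equals $\omega(n)$. Let me note that the argument in fact establishes the stronger statement that $\{n_l\}$ has asymptotic density zero in $\mathbb{N}$, so that on average (and not merely along a subsequence) the quantity $n_{l+1}-n_l-h_k$ tends to infinity.
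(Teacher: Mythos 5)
Your proof is correct, and it is essentially the same argument as the paper's: the paper deduces the theorem from Lemma~\ref{lem 1.4}, whose proof is precisely the Hardy--Ramanujan bound on $\#\{n\leqslant x:\omega(n)=\omega(n_1)\}$ combined with a telescoping estimate for the gaps. You simply run that same combination (Hardy--Ramanujan plus telescoping) in the contrapositive, as a density contradiction applied directly to $\{n_l\}$ rather than to the ambient sequence of all $N$ with $t(N)=t(n_1)$.
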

\begin{proof}
	Indeed, the sequence $\{n_l\}_{l=1}^{+\infty}$ is a subsequence of $\{N_l\}_{l=1}^{+\infty}$, where $N_1, N_2, \ldots, N_l, \ldots$ are consecutive numbers for which $t(N_l) = t(n_1),\ (l\geqslant 1).$ Lemma \ref{lem 1.4} implies that
	$$\limsup\limits_{l \to +\infty} (N_{l+1}-N_l) = +\infty.$$
	Then, since $h_k$ is fixed, we obtain \eqref{limsup}. The claim follows.
\end{proof}
\begin{theorem}
	For all $k\geqslant 1$, there is a natural number $n\geqslant 1$, such that
	$$|O(n, n+1, \ldots, n+k)| = +\infty.$$
\end{theorem}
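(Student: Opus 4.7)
The plan is to reduce the problem to finding infinitely many $n$ whose shifts $n, n+1, \ldots, n+k$ are all ``almost primes,'' and then conclude by a finite pigeonhole argument over rooted trees of bounded size. First I would establish the elementary inequality $E(m) \leqslant \Omega(m)$ for every $m \geqslant 1$, by strong induction on $m$ using the recursion $E(m) = \omega(m) + \sum_{p \mid m} E(\nu_p(m))$ together with the trivial bound $\Omega(\nu) \leqslant \nu - 1$ valid for $\nu \geqslant 1$. This reduces the theorem to the following sub-claim: for every $k \geqslant 1$ there exist a constant $M = M(k)$ and infinitely many integers $n$ satisfying $\Omega(n + j) \leqslant M$ for all $0 \leqslant j \leqslant k$. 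Indeed, each such $n$ gives a tuple $(t(n), t(n+1), \ldots, t(n+k))$ whose constituent trees each have at most $M$ edges, and the encoding bound $\#\{t \in \mathcal{T} : \textbf{E}(t) = \nu\} \leqslant 4^\nu$ from the proof of Theorem \ref{Th1.1} forces the total number of such tuples to be finite; by pigeonhole some tuple is shared by infinitely many of these $n$'s, and any of them satisfies $|O(n, n+1, \ldots, n+k)| = +\infty$.

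The sub-claim is then established by the lower-bound (``fundamental'') form of Brun's sieve, preceded by a Chinese Remainder Theorem pre-restriction that handles the small primes. Concretely, for each prime $p \leqslant k+1$ I would pick an exponent $B_p$ with $p^{B_p+1} > k+1$ and a residue $c_p \pmod{p^{B_p+1}}$ for which $c_p + j \not\equiv 0 \pmod{p^{B_p+1}}$ for every $0 \leqslant j \leqslant k$; this is possible because there are at most $k+1$ bad residues among the $p^{B_p+1} > k+1$ available. Using CRT, these are assembled into a single residue class $n \equiv c \pmod{Q}$ with $Q = \prod_{p \leqslant k+1} p^{B_p+1}$, which forces $\nu_p(n+j) \leqslant B_p$ uniformly for all such $p$ and all $j$. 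Within this progression, I would apply Brun's sieve to sift the $(k+1)$-tuple of linear forms $n, n+1, \ldots, n+k$ by primes $p \in (k+1,\, x^{1/s}]$ for a sufficiently large constant $s = s(k)$. Since all sifting primes exceed $k+1$, each excludes exactly $k+1$ residues, and the standard Brun lower bound produces a sifted count
$$\gg_{k}\, \frac{x}{Q\,(\log x)^{k+1}} \longrightarrow \infty.$$
For every $n$ so produced, each $n + j$ has its prime factors either at most $k+1$ (contributing at most $\sum_{p \leqslant k+1} B_p$ to $\Omega(n+j)$) or greater than $x^{1/s}$ (of which there can be at most $s$), hence $\Omega(n+j) \leqslant s + \sum_{p \leqslant k+1} B_p =: M(k)$ uniformly in $j$.

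The main obstacle is that the tuple $(0, 1, \ldots, k)$ is not admissible modulo any prime $p \leqslant k+1$, since each such prime necessarily divides one of the shifts and hence one cannot naively sift over it. The CRT pre-restriction described above is precisely what circumvents this non-admissibility: after freezing the small-prime valuations via the single congruence class modulo $Q$, the remaining sieve problem over primes in $(k+1,\, x^{1/s}]$ has dimension $k+1$ strictly less than each sifting prime, so Brun's fundamental lemma applies verbatim. With the sub-claim in hand, the pigeonhole step is routine and yields the desired $n$.
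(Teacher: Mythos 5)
Your proposal is correct and takes the same overall route as the paper: reduce to producing infinitely many $n$ with $\Omega(n+j)$ uniformly bounded for $0\leqslant j\leqslant k$, exhibit such $n$ via a lower-bound (Brun) sieve, and conclude by pigeonhole since $\Omega(m)\leqslant M$ confines $t(m)$ to a finite set. The genuine difference is in the handling of small primes, which is where the na\"{\i}ve tuple $(0,1,\dots,k)$ fails to be admissible. The paper forces $n\equiv 0\pmod q$ with $q=\prod_{p\leqslant k+1}p^{\alpha(p)}$, writes $n+j=j(a_jl+1)$ with $a_j=q/j$, and sieves the polynomial $F(l)=l\prod_{j\leqslant k}(a_jl+1)$ over \emph{all} primes up to $x^{1/(10k)}$; since each $p\leqslant k+1$ divides every $a_j$, such $p$ enters the sieve with local density $w(p)=1$. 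You instead fix a single CRT class modulo $Q=\prod_{p\leqslant k+1}p^{B_p+1}$ that dodges every hit $p^{B_p+1}\mid n+j$, so the small primes are disposed of entirely before the sieve, and Brun's fundamental lemma is applied only to primes in $(k+1,x^{1/s}]$, all with local density $k+1$. Both variants are sound; your pre-restriction keeps the sieve dimension homogeneous and avoids the algebraic factorization trick, while the paper folds the small primes into the same sieve as a harmless dimension-one contribution. A further cosmetic difference: the paper bounds the number of trees realizable by an $n$ with $\Omega(n)\leqslant a$ by counting compositions directly, whereas you route through $E(n)\leqslant\Omega(n)$ (your inductive proof of which is correct) and the $4^\nu$ binary-encoding bound; both are routine and equivalent for this purpose.
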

\begin{proof}
	Let us fix the value $k\geqslant 1$ and let $a=a(k)$ be chosen later. Consider the sum
	$$V_k(x) =\sum\limits_{\substack{n\leqslant x \\ \Omega(n+j)\leqslant a,\ 0\leqslant j\leqslant k}} 1.$$
	It is enough to show that this sum is unbounded. Indeed, since \begin{multline*}\#\left\{t\in\mathcal{T}: \exists n\geqslant 1, \Omega(n)\leqslant a, t(n)=t\right\}\\
		\leqslant \sum_{m\leqslant a}\#\left\{(x_1, \ldots, x_m)\in\mathbb{N}^m: x_1+\cdots+x_m\leqslant a\right\} = O_a(1),
		\end{multline*}
	it follows that there are only a finite number of tuples $(t_1, \ldots, t_{k+1})\in \mathcal{T}^{k+1}$ such that
	$t_j = t(n+j), \Omega(n+j)\leqslant a$ for some $n\geqslant 1$ and for each $0\leqslant j\leqslant k$. Hence, if $V_k(x)$ is unbounded as $x\to +\infty$, then at least one of the tuple of trees occurs infinitely many times. Define the number $q=q(k)$ as follows:
	$$q=\prod_{p\leqslant k+1}p^{\alpha(p)},$$
	where $\alpha(p)$ is the smallest number $\alpha$ so that
	$p^{\alpha}>k+1.$
	Now we are going to estimate $V_k(x)$. Put $$P(x) = \prod_{p\leqslant x}p,$$ and leave only those $n$ that are multiples of $q$. Then we get
	\begin{equation}
		V_k(x)\geqslant \sum\limits_{\substack{n\leqslant x-k\\ n \equiv 0\pmod q \\ \Omega(n+j)\leqslant a,\, 0\leqslant j\leqslant k}}1 \geqslant \sum\limits_{\substack{l\leqslant y\\ \gcd\left( {l, P(k+1)}\right)=1 \\ \Omega(ql+j)\leqslant a,\, 0\leqslant j\leqslant k}}1,
	\end{equation}
where $y=(x-k)/q$. It is easy to see that every number $1\leqslant j\leqslant k$ divides $q$. Put $a_j = q/j$, then we will have $lq+j = j(a_jl+1)$. Choose $a$ in the form
$$a = 10k + \sum\limits_{p\leqslant k+1}\alpha(p).$$
Then since $$\Omega(j)\leqslant \Omega(q) = \sum\limits_{p\leqslant k+1}\alpha(p),$$
it follows that
$$V_k(x)\geqslant \sum\limits_{\substack{l\leqslant y\\ \gcd\left( {l, P(k+1)}\right)=1 \\ \Omega(l)\leqslant 10k \\ \Omega(a_jl+1)\leqslant 10k,\, 1\leqslant j\leqslant k}}1\geqslant\sum\limits_{\substack{l\leqslant y\\ \gcd\left(F(l), P(x^\varepsilon)\right) = 1 }}1= S_k(x),$$
where $\varepsilon = \varepsilon(k) = {1/(10k)}$ and
$$F(l) = l \prod\limits_{1\leqslant j\leqslant k}(a_j l+1).$$
The last inequality is explained as follows. Let $\gcd\left(m, P(x^\varepsilon)\right)=1$ for some $m\leqslant x$ and let
$$m = p_1 p_2\cdots p_s$$ 
be a decomposition of $m$ into not necessarily different prime factors. 
Then $s = \Omega(m)$ and 
$$x\geqslant m > x^{\varepsilon s} = x^{\frac{s}{10k}}.$$
Hence, we get that $\Omega(m) = s<10k$, and the last inequality follows. Now, using the Brun's sieve (see Th. $6.2.5$ from \cite{Mur05}), we will give the lower bound for $S_k(x)$. For a finite sequence of natural numbers $\mathcal{A} = (a)$, a subset of primes $\mathcal{P}$, and a parameter $z$, we define the sum $S(\mathcal{A}, \mathcal{P}, z)$ to be
$$S(\mathcal{A}, \mathcal{P}, z) = \sum\limits_{\substack{a \in \mathcal{A} \\ \gcd\left(a, P\right) = 1}} 1,$$
where $$P = \prod\limits_{p \in \mathcal{P},\,p\leqslant z}p.$$
Let also $\mathcal{A}_d = \{a \in \mathcal{A}: a \equiv 0 \pmod d\}$. We assume that, for all $d|P$, 
\begin{equation}\label{cond_g}
	\#\mathcal{A}_d = X\dfrac{w(d)}{d}+R_d,
\end{equation}
where $X$ is indended of $d$, and $w(d)$ is a multiplicative function such that $0\leqslant w(p)<p$ for $p \in \mathcal{P}$ and $w(p)=0$ for $p \not \in \mathcal{P}$.

Put $\mathcal{A} = \left\{F(l)\right\}_{l\leqslant y}$. Let $\mathcal{P}$ be the set of all prime numbers, and let $z = x^\varepsilon$. Then we conclude that $S_k(x) = S(\mathcal{A}, \mathcal{P},z)$. Let us check condition \ref{cond_g}. Consider a number $d|P = P(x^\varepsilon)$. Fix an arbitrary prime divisor $p$ of $d$ and consider the sum
$$S_p = \sum\limits_{\substack{l\leqslant p \\ F(l)\equiv 0\pmod p}} 1.$$
If $p\leqslant k+1$, then the congruence $F(l)\equiv 0\pmod p$ is equivalent to $l \equiv 0\pmod p$ because $p$ divides $a_j$ for each $1\leqslant j\leqslant k$. Hence, we obtain $S_p = 1$. Note that if $p>k+1$, then each $a_j$ is prime to $p$ and $S_p\leqslant k+1$. Using the Chinese remainder theorem, we obtain
$$\sum\limits_{\substack{l\leqslant d \\ F(l)\equiv 0\pmod d}} 1= \prod\limits_{\substack{p|d \\ p>k+1}}S_p.$$
Hence, splitting the segment $[1, y]$ into segments of length $d$, we find that
\begin{multline}
	\#\mathcal{A}_d = \sum\limits_{\substack{l\leqslant y\\ F(l)\equiv 0\pmod d}} 1 = \sum\limits_{\substack{l\leqslant d\lfloor\frac{y}{d} \rfloor\\ F(l)\equiv 0\pmod d}} 1 +  \sum\limits_{\substack{ d\lfloor\frac{y}{d} \rfloor<l\leqslant y\\ F(l)\equiv 0\pmod d}} 1\\
	= \left\lfloor\frac{y}{d} \right\rfloor\prod\limits_{\substack{p|d \\ p>k+1}}S_p + \theta \prod\limits_{\substack{p|d \\ p>k+1}}S_p = \left( \frac{y}{d}-\theta_1\right)  \prod\limits_{\substack{p|d \\ p>k+1}}S_p + \theta \prod\limits_{\substack{p|d \\ p>k+1}}S_p\\
	=\frac{y}{d} \prod\limits_{\substack{p|d \\ p>k+1}}S_p + \left( \theta-\theta_1\right)\prod\limits_{\substack{p|d \\ p>k+1}}S_p,
	\end{multline}
where $0\leqslant \theta, \theta_1\leqslant 1$. Thus, we conclude that $X = y$, $w(d) = \prod\limits_{\substack{p|d \\ p>k+1}}S_p$, and $$R_d = \left( \theta-\theta_1\right)\prod\limits_{\substack{p|d \\ p>k+1}}S_p.$$
Note that $|R_d|\leqslant w(d)$. For $p\leqslant k+1$ we have 
$$0\leqslant \dfrac{w(p)}{p}= \dfrac{1}{p}\leqslant \dfrac{1}{2},$$
and for $p>k+1$:
$$0\leqslant \dfrac{w(p)}{p}\leqslant \dfrac{k+1}{p}\leqslant \dfrac{k+1}{k+2} =  1 - \dfrac{1}{k+2}.$$
Finally, from the Mertens' first theorem, we see that for $2\leqslant u<z$ we have
$$\sum_{u\leqslant p<z}\dfrac{w(p)\log p}{p}\leqslant(k+1)\log\dfrac{z}{u}+3(k+1).$$
Therefore, choosing parameters $A_1 = k+2$, $\kappa = k+1,$ $A_2 =  3(k+1)$, and applying Theorem $6.2.5$ from \cite{Mur05}, we conclude that
$$S_k(x)\geqslant y W(z)(1-E)-R,$$
where 
$$W(z) = \prod_{p\leqslant z}\left(1-\dfrac{w(p)}{p}\right) = \prod_{p\leqslant k+1}\left(1-\dfrac{1}{p}\right) \prod_{k+1<p\leqslant x^\varepsilon}\left(1-\dfrac{S_p}{p}\right),$$
$$E = \dfrac{2\lambda^{2b}e^{2\lambda}}{1-\lambda^22^{2+2\lambda}}\exp\left(\left(2b+3\right)\dfrac{c_1}{\lambda \log z}\right),\ \ c_1 = O_{ \kappa, A_1, A_2}(1),$$
and
$$R = O(z^{2b-1+\frac{2.01}{e^{2\lambda/\kappa}-1}}).$$
Hence, choosing  $\lambda = 0.253$, $b=1$ and using the fact that $x\geqslant x_0(k)$, we get $0<E\leqslant 0.99$,
$$W(z)\gg_k \prod_{k+1<p\leqslant x^\varepsilon}\left(1-\dfrac{k+1}{p}\right)\gg_k \prod_{p\leqslant x^\varepsilon}\left(1-\dfrac{1}{p}\right)^{k+1}\gg_k \dfrac{1}{(\log x)^{k+1}},$$
and
$$R\ll x^{\varepsilon\left(1+\frac{2.01}{\exp({0.506}/(k+1))-1}\right)}\leqslant x^{\varepsilon\left(1+\frac{2.01(k+1)}{0.506}\right)}\leqslant x^{\frac{4k+5}{10k}}\leqslant x^{0.9}.$$
Thus, we obtain that
$$V_k(x)\geqslant S_k(x)\gg_k\dfrac{x}{(\log x)^{k+1}}\to +\infty,$$
as $x\to +\infty.$ The proof of the theorem is complete.
\end{proof}

\section{Trees in arithmetic progressions}
The following theorem is a natural generalization of the well known result by Dirichlet about primes in arithmetic progressions.
\begin{theorem}
	For any fixed nontrivial planar rooted tree $t$, any modulus $q$, and any $a$ coprime to $q$, there are infinitely many $n$ such that $n \equiv a\pmod q$ and $t(n) = t.$
	\end{theorem}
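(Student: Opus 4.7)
The plan is a strong induction on the number of edges of $t$, formulated so that the inductive statement is uniform in the modulus and residue: for \emph{every} pair $(q,a)$ with $\gcd(a,q)=1$, infinitely many $n\equiv a\pmod{q}$ satisfy $t(n)=t$. The base case is the one-edge tree, for which $t(n)=t$ is equivalent to $n$ being prime, so classical Dirichlet's theorem on primes in arithmetic progressions settles it immediately.

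For the inductive step, let the root of $t$ have $k\geq 1$ children carrying rooted planar subtrees $t_{1},\ldots,t_{k}$, listed in the planar order of $t$ (which matches the increasing order of primes in the prime tower factorization). Each $t_{i}$ has strictly fewer edges than $t$. The key device is to arrange that the exponent $e_{k}$ of the \emph{largest} prime of $n$ satisfies $\gcd(e_{k},\varphi(q))=1$, because then $x\mapsto x^{e_{k}}$ is a bijection of $(\mathbb{Z}/q\mathbb{Z})^{\ast}$ and the largest prime can be placed in the unique residue class that forces $n\equiv a\pmod{q}$. Concretely: if $t_{k}$ is the trivial tree take $e_{k}=1$; otherwise apply the inductive hypothesis to $t_{k}$ with modulus $\varphi(q)$ and residue $1$ to obtain infinitely many admissible $e_{k}$. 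Pick any exponents $e_{1},\ldots,e_{k-1}$ realizing $t_{1},\ldots,t_{k-1}$; pick any primes $p_{1}<\cdots<p_{k-1}$ coprime to $q$; set $M=\prod_{i<k}p_{i}^{e_{i}}$ and let $d\in(\mathbb{Z}/q\mathbb{Z})^{\ast}$ be the unique element with $d^{e_{k}}\equiv aM^{-1}\pmod{q}$. Dirichlet's theorem then produces infinitely many primes $p_{k}>p_{k-1}$ with $p_{k}\equiv d\pmod{q}$, and each one yields a distinct $n=M\,p_{k}^{e_{k}}$ with $t(n)=t$ and $n\equiv a\pmod{q}$.

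The main delicate point is that the argument at modulus $q$ invokes the inductive hypothesis at the auxiliary modulus $\varphi(q)$, so the induction must be carried out uniformly in the modulus rather than for $q$ alone; this is the reason I phrase the inductive statement over all $(q,a)$ simultaneously. A secondary ingredient is the existence, for each planar rooted tree $s$, of at least one integer realizing it, which follows from a short separate induction that assembles such an integer by attaching distinct primes at each node. With these in hand, the proof reduces to Dirichlet's theorem together with the elementary fact that in the finite abelian group $(\mathbb{Z}/q\mathbb{Z})^{\ast}$ of order $\varphi(q)$ the $e$-th power map is a bijection exactly when $\gcd(e,\varphi(q))=1$.
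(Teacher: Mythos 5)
Your proof is correct, but it takes a genuinely different route from the paper's. The paper's argument is a single direct construction with no induction: it labels the leftmost root-incident edge with a prime $p_{1}\equiv a\pmod q$ and all other root-incident edges with primes $\equiv 1\pmod q$, and then forces the exponent $e_{1}$ of $p_{1}$ to satisfy $e_{1}\equiv 1\pmod\delta$ (where $\delta$ is the multiplicative order of $a$ mod $q$) simply by labelling the edges of the subtree $t_{1}$ with primes $\equiv 1\pmod\delta$; then $n\equiv a^{e_{1}}\cdot 1\cdots 1\equiv a\pmod q$. You instead induct on the edge count, control the \emph{last} prime $p_{k}$ rather than the first, and make the $e_{k}$-th power map on $(\mathbb{Z}/q\mathbb{Z})^{\ast}$ a bijection by invoking the inductive hypothesis at the auxiliary modulus $\varphi(q)$ so that $e_{k}\equiv 1\pmod{\varphi(q)}$, after which $p_{k}$ can be placed in the unique residue class $d$ solving $d^{e_{k}}\equiv aM^{-1}\pmod q$. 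Both approaches lean only on Dirichlet's theorem. The paper's version is shorter because the observation that a product of primes $\equiv 1\pmod\delta$ is itself $\equiv 1\pmod\delta$ immediately handles the exponent, with no need for a structured recursion or for the uniformity-in-modulus bookkeeping your induction requires; your version is a bit heavier but packages the argument cleanly and makes the recursive structure of the prime tower explicit. One small point worth noting: you arrange $\gcd(e_{k},\varphi(q))=1$ by forcing $e_{k}\equiv 1\pmod{\varphi(q)}$, which is stronger than needed but convenient since residue $1$ is always coprime to the modulus, so the inductive hypothesis applies without any fuss.
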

\begin{proof}
	Let $t$ contain $s \geq 1$ edges incident to the root. Let $t_1$ be the first subtree connected to the root from the left, 
 $t_2$ be the second one, and so on. Define $\delta$ to be the multiplicative order of $a$ modulo $q$, that is
	$$\delta = \min\{\nu\geqslant 1: a^\nu \equiv 1 \pmod q\}.$$	
	 We will show that we can label each edge of $t$ by some prime number so that the resulting number is equal to $a$ modulo $q$. We label the first edge connected to the root on the left by a prime number $p_1 \equiv a \pmod q$, and the $j^{\text{th}}$ edge connected to the root (here $2\leqslant j \leqslant s$) by a prime $p_j\equiv 1 \pmod q$ so that $p_1<p_2<\cdots<p_s$. 
Now we label each edge of $t_1$ by a prime $ p \equiv 1 \pmod \delta$ and each edge of $t_j$ for $2\leqslant j \leqslant s$ by an arbitrary prime. It follows from the Dirichlet's theorem on arithmetic progressions that we can also achieve that $p<q$ whenever the edges corresponding to $p$ and $q$ are incidental to a common vertex and the edge corresponding to $p$ is located to the left of the edge corresponding to $q$.
Denote the number we have constructed by $n$, then $t(n) = t$. Moreover, it is easy to see that for some $N\geqslant 1$ we have
$$n\equiv a^{1+\delta N} \equiv a \pmod q.$$
Since we can construct infinitely many such numbers $n$, we conclude the proof.
\end{proof}

\section{Same tree on consecutive numbers}

Consider two numbers $n,n+1$ with the same (possibly planar) tree. For instance, $O(2,3)$ is a milestone. It is less obvious, but true, that $O(8,9)$ is also a milestone: indeed, if $O(n,n+1) = [t,t]$ for some rooted tree $t$ with only one edge connected to the root (i.e., $\omega(n)=1 = \omega(n+1)$) then $n=8$ \cite{CC}. Also, in this case the corresponding tree $t$ has no symmetry so there is no distinction between the planar and the nonplanar situation. On the other hand, $O(14,15)$ is not a milestone. It is more difficult to decide if the number of pairs of consecutive integers that are both squarefree (i.e. discrete) semiprimes is actually infinite, i.e. if $|O(14,15)|=+\infty$.
By Theorem 4 in \cite{GGPPSY}.
there are infinitely many pairs $(n,n+1)$ such that $t^\#(n) = t^\#(n+1) = t^\#(420)$. 
Concerning the $k=3$ case, the sequence A039833 in the OEIS accounts for the smallest of three consecutive squarefree numbers with exactly two prime divisors (again, the corresponding tree is symmetric).
The first such example is $n=33$ and one finds 2377 instances of that observable in the window up to one million although we do not know if $|O(33,34,35)| = + \infty$.

Define a sequence by setting $a_n$ to be the least integer with the property that $a_n, a_n + 1, a_n + 2, \ldots, a_n + n-1$ share the same nonplanar tree (independently of the tree itself). Then the first entries are $a_2 = 2$, $a_3 = 33$, $a_4 = 19940$, $a_5 = 136824$, $a_6 = 630772$, $a_7 = 30530822$.

\medskip
\noindent In the sequel, we look for bounds on the number of equal consecutive nonplanar trees.

\medskip

For $n\geqslant 1$ define the value $k(n)$ by
$$k(n) = \max\left\{k\geqslant 1: t^\#(n+1) = t^\#(n+2) = \cdots = t^\#(n+k)= t\ \text{for some}\  t \in \left[\mathcal{T}\right]\right\}$$
Note that $k(n)$ is bounded for planar rooted trees. Indeed, since $4 \nmid 4n+2$ and $2$ is the smallest prime divisor of both $4n$ and $4n+2$, it follows that  $t(4n+2)\neq t(4n)$. Hence, if
$$t(n+1) = t(n+2) = \cdots = t(n+k) = t$$
for some $t \in \mathcal{T}$, then $k\leqslant 3.$ 

In this section we prove the following
\begin{theorem}\label{Th6}
	Let $n$ be large enough and $k\geqslant 2$. Then we have
	$$\log k(n) \leqslant \dfrac{2\log n}{\log\log n}.$$
	Moreover, if $n$ has the form
	\begin{equation}\label{type_n}
 n = (2\uparrow\uparrow m)^{2\uparrow\uparrow (m-1)}, \ \ m\geqslant 2,
 \end{equation}
	where $\uparrow\uparrow$ denotes tetration,
	 we have
	$$\log k(n) \leqslant \sqrt{\log n}.$$
\end{theorem}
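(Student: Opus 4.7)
The plan is to combine, in both parts of the theorem, a lower bound on the multiset $B$ of subtrees attached to the root of the common tree $t$ (obtained by tracking how the prime $2$ varies across the window $[n+1,n+k]$) with an upper bound on the size and edge count of $t$ (coming from the primorial estimate and Mertens' theorem).

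Suppose $t^\#(n+1)=\cdots=t^\#(n+k)=t$ and write $s=\omega(n+j)$, which is constant in $j$ and equals the root degree of $t$. The key geometric observation for the general bound is that for each integer $v\geq 1$ with $2^{v+1}\leq k$, the residue class $\{m:\nu_2(m)=v\}$ has density $2^{-v-1}$ and therefore meets the window $[n+1,n+k]$; hence some $j$ satisfies $\nu_2(n+j)=v$ exactly, and the prime $2$ contributes a copy of $t^\#(v)$ to $B$. Letting $V:=\lfloor \log_2 k\rfloor$ and exploiting that distinct root degrees force distinct nonplanar trees, one obtains $s\geq |B|_{\textnormal{distinct}}\gtrsim \log V/\log\log V$. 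On the other hand, since each $n+j$ has $s$ distinct prime factors, $n+j\geq p_1\cdots p_s$ gives $s\leq(1+o(1))\log n/\log\log n$ via the primorial estimate.

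To produce a matching upper bound on $k$, one splits prime incidences $(p,j)$ with $p|n+j$ into small and large primes. By Mertens' theorem, the incidences with primes $p\leq k$ sum to at most $k\log\log k+O(k)$; the remaining incidences come from primes $p>k$, each contributing $\log p>\log k$ to $\log(n+j)$. Summing over $j$ and comparing with $\sum_j \log(n+j)\leq k\log(2n)$ yields
\[
\log k\cdot\bigl(s-\log\log k-O(1)\bigr)\leq \log n+O(1).
\]
In the regime $s\geq (3/2)\log\log n$ this inequality directly gives $\log k\leq 2\log n/\log\log n$; in the complementary regime, the target bound follows from combining the distinct-tree lower bound $s\gtrsim \log V/\log\log V$ with a careful case analysis on the size of $s$.

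For the sharper bound when $n=(2\uparrow\uparrow m)^{2\uparrow\uparrow(m-1)}=2^{T_{m-1}^{2}}$ (with $T_j:=2\uparrow\uparrow j$), the crucial improvement is that $\nu_2(n+j)=\min(\nu_2(n),\nu_2(j))=\nu_2(j)$ whenever $j<2^{T_{m-1}^{2}}$. Taking $j=2^v$ then forces $\nu_2(n+j)=v$ for \emph{every} $v\leq V$, so $t^\#(v)\in B$ for all $v=1,\dots,V$, without losing a factor of $2$ in the window-length argument. Bounding the edge count of $t$ from above by $E(t)=E(n+1)\leq \Omega(n+1)\leq \log(n+1)/\log 2=T_{m-1}^{2}+O(1)$, and from below by the sum of $E(\tau)$ over the distinct subtrees $\tau\in B$ realised by the exponents $v\leq V$ (with a careful enumeration including the distinct stars of each size $\leq \log V/\log\log V$, chains of each length $\leq \log^\ast V$, and their combinations), one derives an inequality that forces $V\lesssim T_{m-1}$, i.e.\ $\log k(n)\leq \sqrt{\log n}$.

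The main obstacle will be the quantitative control of $|B|_{\textnormal{distinct}}$ and of $\sum_{\tau\in B_{\textnormal{distinct}}}E(\tau)$ as functions of $V$: the elementary distinct-root-degree count only gives $\gtrsim \log V/\log\log V$, and to match the stated constants (the factor $2$ in the general bound, and the square-root exponent in the tetration case) a more delicate enumeration of which trees are realised by integers $\leq V$, together with sharp handling of the error terms in the Mertens-type inequality, will be needed.
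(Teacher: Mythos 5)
The paper's proof proceeds by a dichotomy on the \emph{height} $H(n+j)$ of the common tree $t$, not on its root degree $\omega(n+j)$. Setting $l$ so that $2\uparrow\uparrow(l-1)\approx\sqrt{\log n/\log 2}$, the paper argues: if $H(t)<l$ then $k<2\cdot 2\uparrow\uparrow l$ because every window of that length contains an integer with $\nu_2$ equal to $2\uparrow\uparrow(l-1)$ and hence height $\geq l$; if $H(t)\geq l$, a sieve-type count (Lemma~\ref{lem6}) of integers of height $\geq l$ in the window forces $k\ll 2\uparrow\uparrow(l-1)\,n^{1/2\uparrow\uparrow(l-1)}/\log n$. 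The balance of the two cases gives both parts of the theorem. Your proposal replaces height by root degree and tries to close the argument with a Mertens-type incidence count plus a count of distinct subtrees hanging at the root. This is genuinely a different route, but I do not think it closes.

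There are two concrete gaps. First, in the general case your whole argument in the ``complementary'' regime $s<(3/2)\log\log n$ rests on the lower bound $s\geq |B|_{\mathrm{distinct}}\gtrsim \log V/\log\log V$ with $V\approx\log_2 k$, i.e.\ $s\gtrsim \log\log k/\log\log\log k$. But the Mertens inequality you derive, $\log k\cdot\big(s-\log\log k-O(1)\big)\leq \log n+O(1)$, is vacuous as soon as $s\leq\log\log k+O(1)$, and your distinct-tree lower bound on $s$ is always $o(\log\log k)$. So in this regime you never obtain $\log k\leq 2\log n/\log\log n$; combining the two bounds only yields something on the scale $\log k\lesssim(\log n)^{O(\log\log\log n)}$. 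You would need a lower bound on $s$ that is at least comparable to $\log\log k$ (equivalently, at least on the order of $\log V$ distinct trees among $\{t^\#(v):v\leq V\}$), which the distinct-root-degree count does not provide. Second, in the tetration case your edge-count argument cannot produce a contradiction at the claimed scale. You need to force $E(t)>T_{m-1}^2\approx\log n$ whenever $V\gtrsim T_{m-1}\approx\sqrt{\log n}$, i.e.\ $\sum_{\tau\in B_{\mathrm{distinct}}}E(\tau)\gtrsim V^2$. But the enumeration you sketch (stars up to size $\log V/\log\log V$, chains up to length $\log^* V$, and combinations thereof) gives a total edge count of at most $O\big((\log V/\log\log V)^2\big)\approx(\log\log n)^2$, which is dramatically smaller than $V^2\approx\log n$. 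So no contradiction is reached and the bound $\log k(n)\leq\sqrt{\log n}$ does not follow.

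The morals worth extracting from the paper's proof are (i) the quantity to track is the height $H$ rather than $\omega$, because integers with height $\geq l$ are doubly-exponentially rare (probability roughly $1/(2\uparrow\uparrow l)$), which is what makes the sieve count effective; and (ii) the same integer (a multiple of $2^{2\uparrow\uparrow(l-1)}$ in the window) simultaneously forces $H(t)\geq l$ or bounds $k$, which is exactly the dichotomy your root-degree argument lacks. Your observation that $\nu_2(n+j)$ sweeps through all values up to $\log_2 k$ is a good one, but to make it decisive you must convert it into a statement about the height of the common tree, not about its root degree.
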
 
 First of all, we prove the following lemma.
 
 \begin{lemma}\label{lem6.2}
   For $n \to +\infty$ we have
 	$$k(n) = o(n).$$ 
 \end{lemma}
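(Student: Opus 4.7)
The plan is to reduce the statement to a uniform-in-$s$ application of the Hardy--Ramanujan inequality already invoked in Lemma \ref{lem 1.4}. First, observe that if $t^\#(n+1) = t^\#(n+2) = \cdots = t^\#(n+k)$, then all these $k$ integers share a common value of $\omega$, say $s$. It follows that
$$k \;\leq\; \#\{m \leq n+k : \omega(m) = s\} \;\leq\; \frac{c_2\,(n+k)\bigl(\log\log(n+k)+c_1\bigr)^{s-1}}{(s-1)!\,\log(n+k)},$$
with the absolute constants $c_1,c_2$ as in Lemma \ref{lem 1.4}. Since we have no a priori control on $s$, the next task is to bound the right hand side by its maximum over $s\geq 1$.

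By Stirling's formula, $\max_{s\geq 1} a^{s-1}/(s-1)! \ll e^a/\sqrt{1+a}$ uniformly in $a\geq 0$, with the maximum essentially attained at $s-1\approx a$. Choosing $a = \log\log(n+k) + c_1$ and using $e^a \asymp \log(n+k)$, one obtains
$$k \;\ll\; \frac{n+k}{\sqrt{\log\log(n+k)}}.$$
A trivial rearrangement of this inequality then shows that either $k$ is bounded or $k \leq C\,n/\sqrt{\log\log n}$ for $n$ large, and in either case $k=o(n)$.

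The only real technical point, and the main (albeit mild) obstacle, is to verify that the Hardy--Ramanujan bound can be applied uniformly in $s$ and that the Stirling-type maximum indeed occurs in the regime $s-1 \approx \log\log n$, where the factor $e^a$ just cancels the $\log(n+k)$ in the denominator. No new ingredient beyond the quoted uniform Hardy--Ramanujan inequality and elementary Stirling is required. It is worth emphasizing that the argument uses only equality of $\omega$-values along $n+1,\ldots,n+k$, not the full nonplanar-tree equality, so considerably sharper quantitative bounds (such as the ones in Theorem \ref{Th6}) should be achievable by exploiting the richer structure of $t^\#$.
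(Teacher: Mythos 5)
Your proof is correct, but it takes a genuinely different route from the paper. The paper's argument is much shorter: since all of $n+1,\ldots,n+k$ carry the same nonplanar tree, either that tree is a single edge, in which case they are all primes and hence $k=1$ for $n\geq 2$ (among any two consecutive integers one is even), or none of them is prime; in the latter case the Prime Number Theorem supplies a prime in $(n, n+\varepsilon n]$ for every fixed $\varepsilon>0$ once $n\geq n_0(\varepsilon)$, forcing $k\leq \varepsilon n$. Your approach instead extracts only the fact that $\omega(n+1)=\cdots=\omega(n+k)=s$, applies the uniform Hardy--Ramanujan bound already quoted in Lemma \ref{lem 1.4}, and optimizes over $s$ via Stirling ($\max_{s\geq 1} a^{s-1}/(s-1)! \ll e^a/\sqrt{1+a}$). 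This is heavier machinery, and the separate case analysis ($k\geq n$ vs.\ $k<n$) to rearrange $k\ll (n+k)/\sqrt{\log\log(n+k)}$ is a small extra step, but the reward is a genuine rate: you obtain $k(n)\ll n/\sqrt{\log\log n}$, whereas the paper's PNT argument as written only gives $o(n)$ without an explicit speed. Both are valid; the paper trades quantitativeness for brevity, while yours is a modest quantitative improvement at the cost of invoking Hardy--Ramanujan. Your closing remark is also apt: neither proof uses more than equality of $\omega$-values, so the full strength of $t^\#$-equality is far from exhausted, consistent with the much sharper bounds in Theorem \ref{Th6}.
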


\begin{proof}
	If $n+1, n+2, \ldots, n+k$ are primes for sufficiently large $n$, then $k=1$, and we are done. Suppose that $n+1, n+2, \ldots, n+k$ are composite. It follows from the Prime Number Theorem that for each $\varepsilon >0$ there is a $n_0(\varepsilon)>0$ such that for all $n\geqslant n_0(\varepsilon)$ the interval $I_\varepsilon = (n, n+\varepsilon n] $ contains a prime number. Thus $k\leqslant \varepsilon n $ as claimed.
\end{proof}
Let $H(n)$ be the height of $t^{\#}(n)$. We need the following lemma.
 \begin{lemma}\label{lem6}
 	For $l\geqslant 1$ and $x, y\geqslant 2$ put
 	$$S_l(x; y) = \#\left\{x<n\leqslant x+y: H(n)\geqslant l\right\}.$$
 	Then we have
 	$$S_l(x; y) \ll \dfrac{y}{2\uparrow\uparrow l} + 2\uparrow\uparrow(l-1)\dfrac{(x+y)^{{1}/{2\uparrow\uparrow(l-1)}}}{\log(x+y)},$$
 	where the implied constant is absolute.
 \end{lemma}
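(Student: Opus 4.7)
The plan is to show that $H(n) \geq l$ forces $n$ to be divisible by a very large prime power $p^A$, with $A = 2\uparrow\uparrow(l-1)$, and then to bound $S_l(x;y)$ by the count of such $n$ in $(x,x+y]$ via a trivial divisor sum.

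\emph{Step 1 (structure of $H$).} From the recursive definition of $t(n)$ one immediately reads off the recursion
$$H(1)=0,\qquad H(n) = 1 + \max_{p\mid n}H(\nu_p(n))\ \ (n>1).$$
An easy induction on $l$ then shows that the smallest positive integer $m$ with $H(m)\geq l$ is exactly $2\uparrow\uparrow l$, the extremal witness being $m = 2^{2\uparrow\uparrow(l-1)}$ (the minimum is always attained on a prime power, and the recursion forces the exponent to be at least $2\uparrow\uparrow(l-1)$). In particular, if $H(n)\geq l$ then some prime $p$ satisfies $\nu_p(n)\geq 2\uparrow\uparrow(l-1) =: A$, i.e.\ $p^A\mid n$.

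\emph{Step 2 (union bound and split).} Step~1 gives
$$S_l(x;y) \leq \sum_{\substack{p\text{ prime}\\ p^A\leq x+y}}\left(\left\lfloor\frac{y}{p^A}\right\rfloor + 1\right).$$
I would split the prime range at $p = y^{1/A}$ and handle the $y/p^A$ contribution and the $+1$ contribution separately. For $p\leq y^{1/A}$, the terms $y/p^A$ form a sum dominated geometrically by the $p=2$ contribution, giving $\ll y/2^A = y/(2\uparrow\uparrow l)$ with an absolute constant (this uses $A\geq 2$, i.e.\ $l\geq 2$; the case $l=1$ is trivial). For all $p\leq (x+y)^{1/A}$, the $+1$'s add up to at most $\pi\bigl((x+y)^{1/A}\bigr)$, which by Chebyshev's estimate is $\ll A\,(x+y)^{1/A}/\log(x+y)$. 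Combining the two contributions produces exactly the stated inequality.

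\emph{Main obstacle.} The substantive content lies in Step~1: the identification of $2\uparrow\uparrow l$ as the minimal integer of height $\geq l$ is what produces the doubly-exponential saving $y/2\uparrow\uparrow l$; a weaker divisibility witness would destroy this gain. Everything else is routine sieve bookkeeping. The only other thing to check is that the implied constants in $\sum_{p\geq 2} p^{-A} \ll 2^{-A}$ and in Chebyshev's bound for $\pi$ are genuinely absolute (i.e.\ independent of $l$), which is standard.
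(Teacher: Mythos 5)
Your proposal is correct, and it streamlines the paper's argument in a way worth noting. The paper sums over \emph{pairs} $(p,r)$ with $p^r\mid n$ and $H(r)\geqslant l-1$, and for the large-$p^r$ part ($S_2$ in their notation) it must then invoke the auxiliary counting bound $\#\{r\leqslant x:H(r)\geqslant k\}\ll x/(2\uparrow\uparrow k)$ (their inequality \eqref{mainineq}, imported from \cite{Iud21}) together with $\sum_{p\leqslant z}1/\log p\ll z/(\log z)^2$; the two factors cancel to produce the stated bound. You instead observe at the outset that any $r$ with $H(r)\geqslant l-1$ already satisfies $r\geqslant A:=2\uparrow\uparrow(l-1)$, so it suffices to count $n$ with $p^A\mid n$ for a single prime $p$ — a sum over primes only. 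The small-$p$ part then reduces to $y\sum_p p^{-A}\ll y(\zeta(A)-1)\ll y/2^A=y/(2\uparrow\uparrow l)$ exactly as in the paper (both hinge on $\zeta(\alpha)-1\ll 2^{-\alpha}$), while the large-$p$ part becomes a bare $\pi((x+y)^{1/A})$, handled directly by Chebyshev with no need for \eqref{mainineq}. Same decomposition, same two bounds produced, but your version eliminates the double sum and the external lemma; the paper's finer bookkeeping over $r$ buys nothing here since the extremal $r=A$ already dominates. One minor point you should make explicit: the union bound $S_l(x;y)\leqslant\sum_{p^A\leqslant x+y}(\lfloor y/p^A\rfloor+1)$ requires the elementary fact that $(x,x+y]$ contains at most $\lfloor y/d\rfloor+1$ multiples of $d$, and the case $l=1$ (where $A=1$ and $\sum_p p^{-A}$ diverges) is, as you say, trivial since the first term $y/(2\uparrow\uparrow 1)=y/2$ alone absorbs $S_1(x;y)\leqslant y$.
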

\begin{proof} If $H(n)\geqslant l,$ then there is a prime power $p^r$ such that $H(r)\geqslant l-1$ and $p^r$ divides $n$. Hence, we have
	\begin{equation*}S_l(x;y) \leqslant \underset{H(r)\geqslant l-1} {\underset{p^r\leqslant x+y,\, p\,\text{prime}}{\sum\sum}}\sum\limits_{\substack{x<n\leqslant x+y \\ n \equiv 0\,(\text{mod}\, p^r)}} 1 = S_1 +S_2,
		\end{equation*}	
	where $S_1$ denotes the contribution of those $p$ and $r$ for which $p^r\leqslant y$, and $S_2$ denotes the contribution of the remaining $p$ and $r$.
	
	First we estimate $S_1$. Since $r = 2\uparrow\uparrow (l-1)$ is the smallest number for which $H(r)\geqslant l-1$, we have
	\begin{multline*}
		S_1\leqslant \underset{H(r)\geqslant l-1} {\underset{p^r\leqslant y,\, p\,\text{prime}}{\sum\sum}}\left( \dfrac{y}{p^r}+ 1\right) \leqslant 2y\sum_{p}\sum_{r\geqslant 2 \uparrow\uparrow (l-1)}\dfrac{1}{p^r}
		\leqslant 4y\sum_{p}\dfrac{1}{p^{2\uparrow\uparrow (l-1)}}\leqslant 4y (\zeta(2 \uparrow\uparrow (l-1))-1),
	\end{multline*}
where $\zeta(s)$ denotes the Riemann zeta function. Since
$$\zeta(\alpha)-1 \leqslant \dfrac{1}{2^\alpha}+\int_2^{+\infty}\dfrac{dt}{t^\alpha} \ll \dfrac{1}{2^\alpha},\ \ (\alpha >1), $$
we conclude that $S_1 \ll y/(2\uparrow\uparrow l).$
	
	Now we are going to estimate the sum $S_2$. Note that there is at most one $n$ for which $x<n\leqslant x+y$ and $n \equiv 0 \pmod {p^r},\ p^r>y.$ Hence, we get that
	\begin{equation*}
		S_2\leqslant \sum_{p \leqslant (x+y)^{{1}/{2 \uparrow\uparrow (l-1)}}}\sum\limits_{\substack{r\leqslant \frac{\log(x+y)}{\log p} \\ H(r)\geqslant l-1}}1.
	\end{equation*}
Using the uniformly bound (see \cite{Iud21})
\begin{equation}\label{mainineq}
\sum\limits_{\substack{r \leqslant x \\  H(r)\geqslant k}} 1 \ll \dfrac{x}{2 \uparrow\uparrow k},\ \ (x\geqslant 2, k\geqslant 1),
\end{equation}
we find that
\begin{equation}\label{S2}
	S_2 \ll \dfrac{\log(x+y)}{2 \uparrow\uparrow (l-1)} \sum_{p \leqslant (x+y)^{{1}/{2 \uparrow\uparrow (l-1)}}}\dfrac{1}{\log p}.
	\end{equation}
	Let $\pi(z)$ denotes the number of primes up to $z$, then using Chebyshev's bound $\pi(z) \ll z/\log z$, we obtain
	$$\sum_{p\leqslant z} \dfrac{1}{\log p} \leqslant \sqrt{z}+\sum_{\sqrt z< p\leqslant z}\dfrac{1}{\log p}\ll \sqrt{z} + \dfrac{\pi(z)}{\log z}\ll \dfrac{z}{(\log z)^2},\ \ (z\geqslant 2).$$
	Hence, from \eqref{S2} we deduce that
	$$S_2 \ll 2\uparrow\uparrow(l-1)\dfrac{(x+y)^{{1}/{2\uparrow\uparrow(l-1)}}}{\log(x+y)},$$
	which gives the lemma.

\end{proof}

Now we are ready to prove Theorem \ref{Th6}.
\begin{proof}
For $n$ large enough put
$$l = \max\{\nu\geqslant 2: 2\uparrow\uparrow (\nu-1)\leqslant \sqrt{\log n/\log 2}\}.$$
Let us consider several cases.
\begin{enumerate}
	\item $H(n+1) = H(n+2) = \cdots = H(n+k)<l;$
	\item $H(n+1) = H(n+2) = \cdots = H(n+k)\geqslant l.$
\end{enumerate}
Consider the first case. Among any consecutive $2^{\alpha+1}$ numbers, there is exactly one number that is divisible by $2^\alpha$ and not divisible by $2^{\alpha+1}$. Hence, since $H(2 \uparrow\uparrow l) = l$, it follows that
$$k\leqslant 2\cdot 2\uparrow\uparrow l\leqslant 2^{\sqrt{\log n/\log 2}+1}\leqslant e^{\sqrt{\log n}}.$$
Consider the second case. Lemma \ref{lem6} implies that for some $c_1>0$ we have
\begin{equation}
	k = \sum\limits_{\substack{j=1 \\ H(n+j)\geqslant l}}^k 1\leqslant \dfrac{c_1 k}{2\uparrow\uparrow l} + \dfrac{c_1 2\uparrow\uparrow (l-1)}{\log(n+k)} (n+k)^{{1}/{2\uparrow\uparrow (l-1)}}.
	\end{equation}
	Thus, using Lemma \ref{lem6.2} and the inequality 
	$$2\uparrow\uparrow (l-1) > \frac{1}{2}\log\log n-\frac{1}{2}\log\log 2,$$
	which follows from the definition of $l$, we obtain
	\begin{multline*}
		k\leqslant c_1 \left(1-\dfrac{c_1}{2\uparrow\uparrow l}\right)^{-1}\dfrac{2\uparrow\uparrow(l-1)}{\log n} n^{{1}/{2\uparrow\uparrow(l-1)}}(1+o(1))\\
		\leqslant c_2 \dfrac{2\uparrow\uparrow(l-1)}{\log n} n^{{1}/{2\uparrow\uparrow(l-1)}}\leqslant \dfrac{c_2}{\sqrt{\log 2\log n}} n^{2/\log\log n} \leqslant e^{2\log n/\log\log n}.
		\end{multline*}
	Suppose that $n$ has the form $$n = (2\uparrow\uparrow m)^{2\uparrow\uparrow (m-1)}.$$ Then $l = m$ and
	$2\uparrow\uparrow (m-1) = \sqrt{{\log n}/{\log 2}}.$ Hence,
	$$k\leqslant c_2 \dfrac{2\uparrow\uparrow(l-1)}{\log n} n^{{1}/{2\uparrow\uparrow(l-1)}} \leqslant \dfrac{c_2}{\sqrt{\log 2\log n}} n^{\sqrt{\log 2/\log n}}\leqslant e^{\sqrt{\log n}}.$$
	This completes the proof.
\end{proof}

The following theorem shows that the number of integers $n$ with large $k(n)$ is small.
\begin{theorem}
	Set
	$$K(x;g) = \#\left\{n\leqslant x: k(n)\geqslant g\right\}.$$
	Then uniformly on $x\geqslant 2$ and $g\geqslant 2$ we have $$K(x; g)\ll\dfrac{x}{\log g}.$$
	\end{theorem}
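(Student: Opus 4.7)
The plan is to reduce the statement to a sieve estimate on $\omega$-equality and then extract a Mertens factor. The key preliminary observation is that the number of edges incident to the root of $t^{\#}(m)$ equals $\omega(m)$; consequently, the hypothesis $k(n)\geqslant g$ yields the chain of equalities
\[
\omega(n+1)=\omega(n+2)=\cdots=\omega(n+g),
\]
so that
\[
K(x;g)\leqslant\#\{n\leqslant x:\omega(n+1)=\omega(n+2)=\cdots=\omega(n+g)\}.
\]
It therefore suffices to bound the right-hand side by $\ll x/\log g$, which is an estimate of Erd\H{o}s-Pomerance-S\'ark\"ozy type compatible with the use of \cite{EPS} elsewhere in the paper.

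To obtain this bound I would sieve against the primes $p\leqslant g$. For each such $p$, the residue of $n$ modulo $p$ determines the set of positions $j\in[1,g]$ with $p\mid n+j$, which in turn controls the partial contribution of $p$ to each $\omega(n+j)$. The requirement that all $\omega(n+j)$ coincide forces this contribution structure to be compensated by factorizations of the $n+j$ using other primes, and one checks that for each $p$ this compensation excludes at least a constant fraction $c/p$ of residues of $n$ modulo $p$. Applying the Selberg or Brun upper bound sieve with these sifting conditions and using Mertens' third theorem produces
\[
K(x;g)\ll x\prod_{p\leqslant g}\left(1-\frac{c}{p}\right)\ll \frac{x}{\log g},
\]
as desired.

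The cleanest way to execute the residue-exclusion step, which I expect to be the main obstacle, is to pick primes $p$ in the range $(g/2,g]$ (abundant by Bertrand). For such a prime, the window contains at most one multiple of $p$; if $p\mid n+j_{0}$ then $t^{\#}(n+j_{0})$ contains a subtree $t(\nu_{p}(n+j_{0}))$ "coming from $p$" that the other $t^{\#}(n+j)$ must match through a different prime factor of $n+j$. Carefully exploiting the full tree equality (rather than merely $\omega$ equality) furnishes a nontrivial residue restriction of $n\bmod p$ for each admissible $p\in(g/2,g]$, and iterating over these primes produces the Mertens product above.

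Finally, for $g$ bounded by any fixed constant the claimed bound $x/\log g\gg x$ is trivial, so the entire argument can be made uniform by treating small $g$ separately; the delicate work is only needed for $g$ growing. The principal technical crux is the sieve extraction: the $t^{\#}$-equality is a multiset condition on the prime-exponent trees of the $n+j$, and rigorously converting it, prime by prime, into a density loss of order $1/p$ is where I would expect to spend the bulk of the proof.
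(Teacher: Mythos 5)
Your reduction to the chain of equalities $\omega(n+1)=\cdots=\omega(n+g)$ is correct but already throws away the information that actually drives the theorem: you pass from full nonplanar-tree equality to equality of the root degrees and then try to recover everything by a sieve. The sieve step is the genuine gap. You assert that for each prime $p\leqslant g$ (or in $(g/2,g]$) the tree-equality "excludes at least a constant fraction $c/p$ of residues of $n$ modulo $p$," but the argument you sketch does not deliver any such exclusion. If $p\mid n+j_{0}$, the constraint is not that a residue class of $n\bmod p$ is forbidden; it is that the number $n+j_{0}$ must have a particular factorization pattern, and nothing prevents this for any given residue. Indeed one cannot forbid residue classes here: the example $n=13$, $g=2$, $p=3$ shows $t^{\#}(14)=t^{\#}(15)$ with $3\mid 15$ and no constraint on $13\bmod 3$. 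So the claimed product $\prod_{p\leqslant g}\left(1-c/p\right)$ has no basis, and even if some version held, getting the exponent exactly $1$ out of Mertens (rather than $(\log g)^{-c}$ with an unknown $c<1$) would require additional care. Moreover a Brun or Selberg upper-bound sieve would also have to handle uniformity for $g$ growing with $x$, which you do not address.

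The paper's proof uses a different and much sharper invariant of tree equality. If $t^{\#}(n+1)=\cdots=t^{\#}(n+g)$ then the heights $H(n+1)=\cdots=H(n+g)$ all coincide. Choosing $m$ with $2\cdot 2\uparrow\uparrow m\leqslant g<2\cdot 2\uparrow\uparrow(m+1)$, among any $g$ consecutive integers there is one, say $n+j$, with $\nu_{2}(n+j)=2\uparrow\uparrow(m-1)$, forcing $H(n+j)\geqslant m$ and hence $H(n+1)\geqslant m$. The count of $n\leqslant x$ with $H(n+1)\geqslant m$ is $\ll x/(2\uparrow\uparrow m)$ by \eqref{mainineq}, and $2\uparrow\uparrow m\gg\log g$ gives the result. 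The height distribution is drastically more concentrated than that of $\omega$ (doubly exponential decay in $m$), which is precisely why this short argument succeeds where the $\omega$-only route is both technically unresolved and likely too lossy to produce $x/\log g$.
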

\begin{proof}
	Define $m$ from the inequlity
	$$2\cdot 2\uparrow\uparrow m\leqslant g< 2\cdot 2\uparrow\uparrow (m+1).$$
	Then it easy to see that for each $n\geqslant 1$ there is a $1\leqslant j\leqslant g$ such that $$ n+j \equiv 0 \pmod {2\uparrow\uparrow m}\ \ \text{and}\ \  n+j \not\equiv 0 \pmod {2\cdot2\uparrow\uparrow m}.$$ Thus we get $$H(n+j)\geqslant m.$$ Hence, using \eqref{mainineq} we obtain
	\begin{multline*}
		K(x;g) \leqslant \#\left\{n\leqslant x: t^{\#}(n+1) = \cdots = t^\#(n+g)\right\}\\
		\leqslant \#\left\{n\leqslant x: H(n+1) = \cdots = H(n+g)\right\} \\
		\leqslant \#\left\{n\leqslant x: H(n+1)\geqslant m\right\} \ll \dfrac{x}{2\uparrow\uparrow m}.
		\end{multline*}
	Since $$\dfrac{\log g}{\log 2}-1< 2\uparrow\uparrow m,$$
	we conclude the proof.
\end{proof}
A stronger result than Theorem \ref{Th6} (for numbers not of the type \eqref{type_n}) is the following. 
\begin{theorem}\label{Spat}
	For $n \to +\infty$  we have
	$$\log k(n) \leq  \left(\dfrac{1}{\sqrt{2}}+o(1)\right)\sqrt{\log n\log\log n}.$$
	\end{theorem}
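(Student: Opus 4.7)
The plan is to reduce the statement to an analogous bound for the arithmetic function $\omega(n)$ established by Erd\H{o}s, Pomerance and S\'ark\"ozy in \cite{EPS}. The elementary but crucial observation is that $\omega(n)$ is an invariant of the nonplanar tree $t^\#(n)$: indeed it equals the number of edges incident to the root. Consequently, $t^\#(m)=t^\#(m')$ forces $\omega(m)=\omega(m')$, and so any run of consecutive integers on which $t^\#$ is constant is, a fortiori, a run of consecutive integers on which $\omega$ is constant.

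First I would fix $n$ large and set $k=k(n)$. By the observation above, $t^\#(n+1)=\cdots=t^\#(n+k)$ implies $\omega(n+1)=\cdots=\omega(n+k)$. Hence $k(n)$ is bounded by the length $L_\omega(N)$ of the longest maximal run of consecutive integers in $[1,N]$ on which $\omega$ is constant, for any $N\geqslant n+k(n)$. By Lemma \ref{lem6.2} we know $k(n)=o(n)$, so I may take $N=n(1+o(1))$, which gives $\log N=\log n+o(1)$ and $\log\log N=\log\log n+o(1)$.

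Next I would invoke the theorem of \cite{EPS}, which asserts that $\log L_\omega(N)\leqslant\bigl(\tfrac{1}{\sqrt{2}}+o(1)\bigr)\sqrt{\log N\,\log\log N}$ as $N\to+\infty$. Substituting the asymptotic expressions for $\log N$ and $\log\log N$ and taking logarithms recovers exactly the bound on $\log k(n)$ claimed in Theorem \ref{Spat}. In particular, nothing about the tree structure beyond the elementary observation in paragraph one is needed.

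Since the substance of the argument lives entirely in \cite{EPS}, the only real obstacle is a matter of translation: ensuring that the cited reference is stated in the precise form I need (run length in $[1,N]$, constant $1/\sqrt{2}$, $o(1)$ formulation). If the statement there is phrased differently, say with a slightly different range or normalization, I would either invoke an obvious corollary or repeat the underlying argument, which rests on combining the Hardy--Ramanujan inequality with a sieve optimized at the parameter $y$ satisfying $\log y\sim\sqrt{\log N\log\log N}/\sqrt{2}$; no new number-theoretic input beyond \cite{EPS} is required, because the reduction $t^\#\rightsquigarrow\omega$ has already dispensed with all tree-specific information.
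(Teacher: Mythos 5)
Your proposal is correct and matches the paper's proof: both reduce the claim to the Erd\H{o}s--Pomerance--S\'ark\"ozy bound on runs of consecutive integers with equal $\omega$-value via the observation that $\omega(n)$ is an invariant of $t^\#(n)$ (the number of edges at the root), so $k(n)\leqslant k_{EPS}(n)$. The only cosmetic difference is that you route through a maximum-run-length function on $[1,N]$ together with Lemma~\ref{lem6.2}, whereas the paper compares $k(n)$ directly to $k_{EPS}(n)$ and applies \cite[Th.\ 5]{EPS} at once.
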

Note that the theorem follows immediately from the corresponding bound in \cite[Th. 5]{EPS}, where the authors consider the quantity
$$k_{EPS}(n):= \max\left\{k\geqslant 1: \omega(n+1) = \omega(n+2) = \cdots = \omega(n+k)\right\}$$
and prove that
$$k_{EPS}(n)\leqslant \left(\frac{1}{\sqrt{2}}+o(1)\right)\sqrt{\log n\log\log n}.$$
Since trivially we have $k(n)\leqslant k_{EPS}(n),$ the desired result follows. Anyway, it would be interesting to have a better understanding of the relationship between $k(n)$ and $k_{EPS}(n)$.

\begin{theorem}
	For $\omega(n)\to +\infty$ we have
	$$k(n-1)\leqslant \omega(n)^{\omega(n)(1+o(1))}.$$
\end{theorem}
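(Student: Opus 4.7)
The plan is to exploit only the weakest consequence of the chain of equalities $t^{\#}(n)=t^{\#}(n+1)=\cdots$, namely equality of $\omega$, and then apply an elementary pigeonhole observation about multiples of the primorial.

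Set $K=k(n-1)$ and $r=\omega(n)$. By the definition of $k(\cdot)$ we have $t^{\#}(n+j)=t^{\#}(n)$ for every $j\in\{0,1,\ldots,K-1\}$; since the nonplanar tree determines $\omega$, this yields $\omega(n+j)=r$ for all such $j$.

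Next, let $P_{r+1}=p_1p_2\cdots p_{r+1}$ denote the product of the first $r+1$ primes. Any block of $P_{r+1}$ consecutive integers contains a multiple of $P_{r+1}$, and every multiple of $P_{r+1}$ has at least $r+1$ distinct prime divisors. Hence if $K\geqslant P_{r+1}$, then some $n+j$ in $[n,n+K-1]$ would satisfy $\omega(n+j)\geqslant r+1>r$, contradicting the preceding observation. Therefore $K<P_{r+1}$.

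Finally, by the Prime Number Theorem, $p_{r+1}=r\log r\,(1+o(1))$ and $\log P_{r+1}=\sum_{i=1}^{r+1}\log p_i=p_{r+1}(1+o(1))$, whence $P_{r+1}=\exp\bigl(r\log r\,(1+o(1))\bigr)=r^{r(1+o(1))}$ as $r\to+\infty$. Combining the two bounds gives $k(n-1)<P_{r+1}=\omega(n)^{\omega(n)(1+o(1))}$, as claimed.

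I do not expect any real obstacle: the argument is essentially a one-line pigeonhole on multiples of $P_{r+1}$. Note that only $\omega$-equality (not the full structure of $t^{\#}$) is used, so it would be reasonable to expect that exploiting the exponent multiset encoded in $t^{\#}$ could yield a substantially sharper bound.
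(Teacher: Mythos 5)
Your proof is correct and is essentially the same as the paper's: both arguments locate a multiple of the primorial $p_1\cdots p_{\omega(n)+1}$ in a block of $K$ consecutive integers starting at $n$, deduce that its $\omega$-value exceeds $\omega(n)$ (hence its nonplanar tree differs), conclude $k(n-1)<p_1\cdots p_{\omega(n)+1}$, and finish with the Prime Number Theorem estimate for the primorial. The only cosmetic difference is that you make explicit that only $\omega$-equality (rather than full $t^{\#}$-equality) is being used, a point the paper leaves implicit.
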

\begin{proof}
	Set
	$$P(n) = \prod_{p\leqslant p_{\omega(n)+1}}p.$$
	For each $n\geqslant 1$ there is a number $j$ such that $0 \leqslant j \leqslant P(n)-1$ and $$n+j \equiv 0 \pmod {P(n)}.$$ Since
	$\omega(n+j)\geqslant \omega(P(n))>\omega(n)$, it follows that 
	$t^\#(n)\neq t^{\#}(n+j).$ Thus 
 $$k(n-1)\leqslant P(n).$$ Using the Prime Number Theorem, we complete the proof.
\end{proof}

It is well known (see \cite[Theorem 010]{HT} and take $t = x$) that
$$\#\left\{n\leqslant x: \omega(n)\leqslant 10\log\log x\right\} = x- o(x).$$
Hence, for almost all $n\leqslant x$ we have
$$\log k(n) = O(\log\log x\log\log\log x),$$
which gives a better bound than the one given by Theorem \ref{Spat}.


\section{
When two configurations of trees are equal}

In this section, we study relations between numbers that give the same tree configurations.

\begin{lemma}\label{k_log_nm_nk}
	Let $k$, $m$, and $n$ be positive integers for which
	$m<n$
	and
	\begin{equation}\label{OO}
		O(m, m+1, \ldots, m+k) = O(n, n+1, \ldots, n+k).
		\end{equation}
	Then we have
	$$k\log\dfrac{n+k}{m+k}\ll \sqrt{n+k},$$
	where the implied constant is absolute.
	\end{lemma}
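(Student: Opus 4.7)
The plan is to use the tree equality to convert the hypothesis into an identity for $\omega$. Since $\omega(r)$ equals the number of edges incident to the root of $t(r)$, the assumption $O(m,\ldots,m+k) = O(n,\ldots,n+k)$ forces $\omega(m+j) = \omega(n+j)$ for each $j = 0, 1, \ldots, k$. Summing the identities $2^{\omega(m+j)} = 2^{\omega(n+j)}$ over $j$, writing each side as a difference of two initial segments of $\sum_{r\leqslant x}2^{\omega(r)}$, and inserting the asymptotic formula
$$\sum_{r \leqslant x} 2^{\omega(r)} = c_1 x\log x + c_2 x + R(x), \qquad R(x) \ll \sqrt{x},$$
one sees that the $c_2$-contributions cancel and that the four remainder terms combine into $O(\sqrt{n+k})$. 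Writing $F(x) := x\log x$, this leaves
$$\bigl|F(n+k) - F(m+k) - F(n-1) + F(m-1)\bigr| \ll \sqrt{n+k}.$$

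Next, using $F(b) - F(a) = (b-a) + \int_a^b \log t\,dt$, the two $(n-m)$-contributions cancel and the substitution $s = t + k + 1$ in the first resulting integral converts the above bound into
$$\int_{m-1}^{n-1}\log\frac{t+k+1}{t}\,dt \ll \sqrt{n+k}.$$

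To finish, I would apply the identity $\log\frac{t+k+1}{t} = \int_0^{k+1}\frac{ds}{t+s}$ and swap the order of integration by Fubini:
$$\int_{m-1}^{n-1}\log\frac{t+k+1}{t}\,dt = \int_0^{k+1}\log\frac{n-1+s}{m-1+s}\,ds.$$
Because $n > m$, the right-hand integrand is positive and strictly decreasing in $s$; its value at $s = k+1$ is $\log\frac{n+k}{m+k}$, and hence the integral is bounded below by $(k+1)\log\frac{n+k}{m+k}$. Combined with the previous estimate this yields the required $k\log\frac{n+k}{m+k} \ll \sqrt{n+k}$.

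The only delicate point is the first step: I must verify that after substituting the asymptotic formula the main terms assemble as a single difference of four $F$-values and that the four $R$-contributions really collapse to $O(\sqrt{n+k})$ (this is immediate since every argument of $R$ is at most $n+k$). Everything else is routine calculus together with a standard Fubini swap.
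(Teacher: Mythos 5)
Your argument is correct and follows essentially the same route as the paper's proof: both start from $\omega(m+j)=\omega(n+j)$, sum $2^{\omega}$ over the window, insert the asymptotic formula $\sum_{r\leqslant x}2^{\omega(r)}=c_1x\log x+c_2x+O(\sqrt{x})$, observe that the linear main terms cancel, and then bound the remaining combination of $x\log x$ terms from below by roughly $k\log\frac{n+k}{m+k}$. The only difference is cosmetic: the paper applies the mean value theorem to $g(x)=c_1\bigl((n+x)\log(n+x)-(m+x)\log(m+x)\bigr)$ and uses that $g'$ is decreasing, whereas you reach the same lower bound via an integral representation, a Fubini swap, and monotonicity of the integrand.
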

\begin{proof}
	It follows from \eqref{OO} that for each $0\leqslant a<b\leqslant k$ we have
	\begin{equation}\label{Short_config}
		O(m+a+1, \ldots, m+b) = O(n+a+1, \ldots, n+b).
		\end{equation}
	Set
	$$F(x) = \sum_{n\leqslant x} 2^{\omega(n)}.$$
 Hence, from \eqref{Short_config} we find that
	$$F(n+b) - F(n+a) = F(m+b) - F(m+a),\ \ (0\leqslant a<b\leqslant k).$$
	Therefore, the function
	$$f(a) = F(n+a)-F(m+a)$$
	is constant when $0\leqslant a\leqslant k.$
	Choose $a'$, $a$ such that $0\leqslant a'< a\leqslant k$ and
	\begin{equation}\label{a-a'}
		a-a' = k(1-\varepsilon)
		\end{equation}
	for some $0<\varepsilon<1.$  Then using the asymptotic formula (see \cite{Sa} and \cite{GV})
	\begin{equation}\label{asymp}
		F(x) = c_1 x \log x + c_2 x + O\left( \sqrt{x}\right),
	\end{equation}
	where $c_1 = {1}/{\zeta(2)} = {6}/{\pi^2}$, and $c_2$ is some (more complicated) constant, we get 
	\begin{multline}\label{g(a)-g(a')}
		0 = f(a)-f(a') = F(n+a)-F(m+a)-F(n+a')+F(m+a')\\
		 =c_1((n+a)\log(n+a)-(m+a)\log(m+a))\\
		 -c_1((n+a')\log(n+a')-(m+a')\log(m+a'))+ O(\sqrt{n+k})\\
		 =g(a)-g(a')+ O(\sqrt{n+k}),
	\end{multline}
where
$$g(x) = c_1((n+x)\log(n+x)-(m+x)\log(m+x)).$$
Using the mean value theorem for some $0\leqslant \theta\leqslant 1$ we obtain
\begin{equation}\label{log_n_k_m}
	g(a)-g(a') = g'(a'+\theta(a-a'))(a-a')\geqslant c_1\left( \log\dfrac{n+k}{m+k}\right) (a-a').
\end{equation}
Using \eqref{a-a'}, \eqref{g(a)-g(a')}, and \eqref{log_n_k_m} we conclude the proof.

\end{proof}

Now we prove the following statement. If we have two same  configurations of trees, say, $O(m, m+1, \ldots, m+k)$ and $O(n, n+1, \ldots, n+k)$, and if $k$ is not too large and not too small in terms of $m$ and $n$, then the distance between $n$ and $m$ is small. 

\begin{theorem}
	Let $k$ be large enough, $m \leqslant n$, and $$n^\alpha\leqslant k\leqslant m ^{\beta},$$ where $\alpha $ and $\beta$ are fixed parameters with $1/2< \alpha< \beta< 1.$ Suppose that
	\begin{equation}\label{O} O(m, m+1, \ldots, m+k) = O(n, n+1, \ldots, n+k).
	\end{equation}
	Then we have
	$$n = m + O(m^\gamma),$$
	where $\gamma = 3/2-\alpha<1.$
\end{theorem}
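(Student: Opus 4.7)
The plan is to derive the result directly from Lemma~\ref{k_log_nm_nk} by a bootstrap argument. Applying that lemma to the configuration~\eqref{O} and dividing by $k$ yields
$$\log \frac{n+k}{m+k} \ll \frac{\sqrt{n+k}}{k}.$$
The strategy is first to use this inequality to show that $n$ is of the same order of magnitude as $m$, and then to feed that information back in to obtain the sharper bound on $n-m$.

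For the first step, I would use the lower bound $k \geqslant n^\alpha$ together with $k \leqslant m^\beta \leqslant m \leqslant n$ (which gives $n+k \leqslant 2n$) to deduce
$$\log \frac{n+k}{m+k} \ll \frac{\sqrt{n}}{n^\alpha} = n^{1/2-\alpha}.$$
Since $\alpha > 1/2$, the right-hand side tends to $0$ as $n \to \infty$, so $(n+k)/(m+k)$ is bounded by an absolute constant, and therefore $n = O(m)$.

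For the second step, having established $n \ll m$, we now have $\sqrt{n+k} \ll \sqrt{m}$, while the lower bound $k \geqslant n^\alpha \geqslant m^\alpha$ (using $n \geqslant m$) is still available. Plugging these back in gives
$$\log \frac{n+k}{m+k} \ll \frac{\sqrt{m}}{m^\alpha} = m^{1/2-\alpha}.$$
Since this tends to $0$ and $\log(1+x) \asymp x$ on any bounded interval $[0,C]$, we obtain
$$\frac{n-m}{m+k} \ll m^{1/2-\alpha},$$
and combining with $m + k \leqslant 2m$ one concludes $n - m \ll m^{3/2-\alpha} = m^\gamma$, as required.

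The whole argument is essentially a direct consequence of Lemma~\ref{k_log_nm_nk}; the only conceptual subtlety is the bootstrap, since we must first know $n \asymp m$ before we can legitimately replace $n^\alpha$ by $m^\alpha$ in the denominator. The hypothesis $\alpha > 1/2$ is used precisely to make the initial bound $n^{1/2-\alpha}$ tend to zero, and the hypothesis $\beta < 1$ is used only to ensure $k \leqslant m$ so that $m + k \ll m$ in the last line.
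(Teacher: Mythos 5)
Your proof is correct and rests on the same foundation as the paper's: you apply Lemma~\ref{k_log_nm_nk} to bound $\log\frac{n+k}{m+k}$, observe that the bound tends to zero so that exponentiation gives $n-m \ll (m+k)\cdot(\text{bound})$, and then use $n^\alpha \leqslant k \leqslant m^\beta$ to finish. The only organizational difference is that you run a two-step bootstrap (first show $n \ll m$ via $k \geqslant n^\alpha$ and $n+k \leqslant 2n$, then refine using $m^\alpha \leqslant k$ and $m+k \leqslant 2m$), whereas the paper bounds $\log\frac{n+k}{m+k} \ll k^{1/(2\alpha)-1}$ in terms of $k$ alone (using $n \leqslant k^{1/\alpha}$) and then maximizes the error term $\frac{m}{k^{1-1/(2\alpha)}} + k^{1/(2\alpha)}$ over the range $m^\alpha \leqslant k \leqslant m^\beta$ by a monotonicity argument. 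Your bootstrap is a pleasant way to avoid that calculus step; the paper's version is more of a one-pass computation. Both make the same use of the hypotheses $\alpha > 1/2$ (to get the bound to decay) and $\beta < 1$ (to keep $k$ of lower order than $m$).
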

\begin{proof}
	We may assume that $m<n$. It follows from the inequality $n\leqslant k^{1/\alpha}$ and Lemma \ref{k_log_nm_nk} that
	$$\log\dfrac{n+k}{m+k}\ll \dfrac{\sqrt{n+k}}{k}\ll k^{\frac{1}{2\alpha}-1}.$$
	Since ${1}/({2\alpha})-1 < 0$, we get
	$$
	\dfrac{n+k}{m+k} \leqslant e^{O\left( k^{\frac{1}{2\alpha}-1}\right)}= 1+O \left( k^{\frac{1}{2\alpha}-1}\right).
	$$
	Hence, we will have
	\begin{equation}\label{nmk}
	n + k \leqslant m + k + O \left(\dfrac{m}{k^{1-\frac{1}{2\alpha}}} + k^{\frac{1}{2\alpha}} \right).
	\end{equation}
	Since $m^\alpha\leqslant k \leqslant m^\beta$ and the function 
	$$f(k) = \dfrac{m}{k^{1-\frac{1}{2\alpha}}} + k^{\frac{1}{2\alpha}}$$
	 is decreasing for $$0<k\leqslant m\left( 2\alpha-1\right),$$ it follows that the remainder term in \eqref{nmk} does not exceed
	 $$O(f(m^\alpha)) = O(m^{\frac{3}{2}-\alpha}).$$
	 Thus we have
	 $$m<n\leqslant m+ O(m^{\frac{3}{2}-\alpha}).$$
	 The proof of the lemma is complete.
	\end{proof}
Now we show that if we have two same configurations of trees then the length $k$ of these configurations is not too large.
\begin{theorem}\label{Thm7.3}
	Let $k$, $m$, and $n$ be positive integers for which
	$m\leqslant n/2$
	and
	\begin{equation*}
		O(m, m+1, \ldots, m+k) = O(n, n+1, \ldots, n+k).
	\end{equation*}
	Then we have
	$$k \ll \sqrt{n}(\log n)^{\frac{3}{2}},$$
	where the implied constant is absolute.
\end{theorem}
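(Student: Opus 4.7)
The plan is to apply Lemma \ref{k_log_nm_nk} to the configuration at hand and to leverage the hypothesis $m\leqslant n/2$ to bound the logarithm appearing there below by an absolute positive constant. In fact, this will yield the stronger bound $k\ll \sqrt{n}$, which immediately implies the stated $k\ll \sqrt{n}(\log n)^{3/2}$.

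I would first rule out the regime $k\geqslant n$. The hypothesis $O(m,m+1,\ldots,m+k) = O(n,n+1,\ldots,n+k)$ automatically implies the prefix identity $O(m,m+1,\ldots,m+s) = O(n,n+1,\ldots,n+s)$ for every $1\leqslant s \leqslant k$, so Lemma \ref{k_log_nm_nk} applies to each such prefix. If $k\geqslant n$ were to hold, applying it with $s=n$ would give $n\log\frac{2n}{m+n} \ll \sqrt{2n}$. Using $m\leqslant n/2$, one has $\frac{2n}{m+n}\geqslant \frac{2n}{3n/2} = \frac{4}{3}$, so $\log\frac{2n}{m+n}\geqslant \log(4/3)$, forcing $n\ll \sqrt{n}$ — a contradiction for $n$ large enough. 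Hence $k<n$.

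Next, I would apply Lemma \ref{k_log_nm_nk} to the full configuration, obtaining $k\log\frac{n+k}{m+k}\ll \sqrt{n+k}$. Since $k<n$ and $m\leqslant n/2$, one has $\frac{n+k}{m+k}\geqslant \frac{n+k}{n/2+k}$, and the right-hand side is a decreasing function of $k$ on $(0,n]$ attaining its minimum value $4/3$ at $k=n$. Thus $\log\frac{n+k}{m+k}\geqslant \log(4/3)$, which combined with the lemma yields $k\ll \sqrt{n+k}\leqslant \sqrt{2n}$, i.e., $k\ll \sqrt{n}$.

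The argument is essentially immediate once Lemma \ref{k_log_nm_nk} is available: the only nontrivial step is the realization that the separation hypothesis $m\leqslant n/2$ (equivalently $n-m\geqslant n/2$) keeps the relevant logarithm bounded below by an absolute positive constant throughout the entire range $0<k\leqslant n$, and that the range $k\geqslant n$ can be discarded by the same mechanism applied to a prefix. The bound $k\ll \sqrt{n}$ derived here is in fact strictly sharper than the stated $k\ll \sqrt{n}(\log n)^{3/2}$; the extra $(\log n)^{3/2}$ in the theorem is presumably the price paid by a less direct bookkeeping. No genuine obstacle beyond Lemma \ref{k_log_nm_nk} is encountered.
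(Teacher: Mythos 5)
Your proof is correct, and it actually establishes the sharper bound $k\ll\sqrt{n}$, which is strictly better than the $\sqrt{n}(\log n)^{3/2}$ claimed in the theorem. The two approaches genuinely diverge after Lemma~\ref{k_log_nm_nk}. The paper's proof invokes Theorem~\ref{Th1.4} to get $k<\kappa_+(m)\leqslant cm\log m\leqslant cn\log n$, then feeds this into the denominator $m+k$, obtaining only the weak lower bound $\log\tfrac{n+k}{m+k}\gg 1/\log n$; combined with $\sqrt{n+k}\ll\sqrt{n\log n}$, this costs two extra logarithmic factors. You instead first use the lemma applied to the length-$n$ prefix (valid because $O(m,\ldots,m+k)=O(n,\ldots,n+k)$ clearly implies equality of all shorter initial segments) to deduce $k<n$, and then observe the elementary inequality $\tfrac{n+k}{m+k}\geqslant 4/3$ for $m\leqslant n/2$ and $k\leqslant n$ (equivalently $3n\geqslant 4m+k$). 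This gives $\log\tfrac{n+k}{m+k}\geqslant\log(4/3)$, an absolute constant, and therefore $k\ll\sqrt{n+k}\ll\sqrt{n}$. Your route has two clear advantages: it does not require the upper bound on $\kappa_+$ from Theorem~\ref{Th1.4} at all, and it removes the $(\log n)^{3/2}$ factor. One might as well restate the theorem with the improved bound.
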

\begin{proof}
	Since $m\neq n$ we have
	$$k < \kappa_+(m)\leqslant cm\log m\leqslant cn\log n$$
	for some sufficiently large $c>0$.

Hence, from Lemma \ref{k_log_nm_nk} we derive
$$k\log\dfrac{n+k}{m+k}\ll \sqrt{n\log n}.$$
Since
\begin{multline*}
	\log \dfrac{n+k}{m+k} = \log \left(1+\dfrac{n-m}{m+k} \right)\\
	\geqslant  \log \left(1+\dfrac{n/2}{2c m\log m} \right)\geqslant \log\left(1+O\left(\dfrac{1}{\log n}\right) \right)\gg \dfrac{1}{\log n}, 
\end{multline*}
we conclude the proof.
\end{proof}

We now prove an analogue of Theorem \ref{Thm7.3} for all $m<n$. Namely, we prove the following
\begin{theorem}\label{MainTh}
Let $k$, $m$, and $n$ be positive integers for which
$m<n$
and
\begin{equation*}
	O(m, m+1, \ldots, m+k) = O(n, n+1, \ldots, n+k).
\end{equation*}
Then we have
$$k\ll n^{\frac{3}{4}}(\log n)^{\frac{3}{2}},$$
where the implied constant is absolute.
\end{theorem}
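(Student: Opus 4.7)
The plan is to extend the argument of Theorem \ref{Thm7.3} to the regime where $m$ is close to $n$ by means of a shift-and-iterate periodicity trick.  Writing $d := n-m \geq 1$, the hypothesis says that the sequence $t(m), t(m+1), \ldots, t(n+k)$ is $d$-periodic on its initial segment $[m, m+k]$.  Iterating this periodicity, for every positive integer $I$ with $(I-1)d \leq k$ we still have
\[
O(m, \ldots, m+k') = O(m+Id, \ldots, m+Id+k'), \qquad k' := k - (I-1)d,
\]
so Lemma \ref{k_log_nm_nk}, applied to the pair $(m, m+Id)$ instead of $(m,n)$, yields (using $m+Id+k' = n+k$ and $m+k' = n+k-Id$)
\[
k'\log\frac{n+k}{n+k-Id} \ll \sqrt{n+k}.
\]
This shifted family of inequalities is the combinatorics-on-words input alluded to in the introduction.

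By Theorem \ref{Th1.4}, $k < \kappa_+(m) = O(m\log m) = O(n\log n)$, so that $n+k = O(n\log n)$.  I would then split on the size of $d$ relative to $k$.  \emph{Case $d\leq k$.}  Choose an integer $I$ with $Id \in [k/2, k/2+d]$ (possible because $d\leq k$), which forces $k' \in [k/2, k/2+d]$ as well.  The elementary estimate $\log\frac{x}{x-y} \geq y/x$ then converts the displayed inequality into $k'\cdot Id \ll (n+k)^{3/2}$, and since $k'\cdot Id \geq (k/2)^2$ one obtains
\[
k^2 \ll (n+k)^{3/2} \ll (n\log n)^{3/2}, \qquad\text{hence}\qquad k \ll n^{3/4}(\log n)^{3/4},
\]
even stronger than the claim.  \emph{Case $d>k$.}  Apply Lemma \ref{k_log_nm_nk} directly: if $m+k\leq d$ then $\log(1+d/(m+k))\geq \log 2$ and at once $k \ll \sqrt{n+k} \ll \sqrt{n\log n}$; otherwise $\log(1+d/(m+k))\geq d/(2(m+k))$, so $kd/(m+k) \ll \sqrt{n+k}$, and combining with $d>k$ together with $m+k = O(n\log n)$ produces $k^2 \ll (m+k)\sqrt{n+k} \ll (n\log n)^{3/2}$.

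The main obstacle is setting up the periodicity-iteration step carefully, in particular verifying that the shifted equality of observables holds for every admissible $I$ and that $I$ can be chosen so that both $Id$ and $k'$ are simultaneously of order $k$ (which is exactly why the case split at $d \leq k$ is made).  Once this is in place, the analysis reduces to the already-available Lemma \ref{k_log_nm_nk} together with the $\kappa_+$ bound from Theorem \ref{Th1.4}, and the remainder is routine algebra.
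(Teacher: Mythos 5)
Your argument is correct, and it takes a genuinely different, cleaner route than the paper's. Both exploit the $d$-periodicity with $d=n-m$, but after splitting on $m\le n-n^{3/4}$ the paper's hard case runs through the combinatorics-on-words Lemma~\ref{words}, Lemma~\ref{prime_no_div}, and an intricate upper bound on $\sum_{j\le k}E(n+j)$ decomposed over small and large primes; you replace all of that by iterating the periodicity to obtain $O(m,\ldots,m+k')=O(m+Id,\ldots,m+Id+k')$ with $k'=k-(I-1)d$, and then reuse the paper's own Lemma~\ref{k_log_nm_nk} on the pair $(m,m+Id)$. With $I=\lceil k/(2d)\rceil$ you indeed get $Id\ge k/2$, $(I-1)d<k/2$ hence $k'>k/2$, and the elementary bound $\log\frac{x}{x-y}\ge y/x$ turns the lemma into $k^2\ll(n+k)^{3/2}$; combined with $k<\kappa_+(m)\ll n\log n$ from Theorem~\ref{Th1.4} this gives $k\ll n^{3/4}(\log n)^{3/4}$, a slightly sharper power of the logarithm than the paper's $n^{3/4}(\log n)^{3/2}$, and the case $d>k$ is an immediate application of the same lemma. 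Two further points worth noting: you do not actually need the full Lyndon--Sch\"utzenberger-type Lemma~\ref{words} for the iteration (the direct chain $t(m+j)=t(m+d+j)=\cdots$ for $0\le j\le k-(I-1)d$ suffices), and your method also recovers the paper's conditional remark that an improved error term $R(x)\ll x^{\delta}$ in $\sum_{n\le x}2^{\omega(n)}$ would yield $k\ll n^{(1+\delta)/2}(\log n)^{O(1)}$, since the only analytic input is Lemma~\ref{k_log_nm_nk}.
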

 Note that Lemma \ref{k_log_nm_nk} is no longer sufficient, since for $m$ close to $n$ it gives too rough bound for $k$.
 
  First, we need one unexpected lemma from the combinatorics on words. Let us fix some notations. Let $\Sigma$ denotes an arbitrary alphabet, and $\Sigma^*$ denotes the set of all finite words over $\Sigma$. For each $\alpha, \beta \in \Sigma^*$ we define $\alpha\beta \in \Sigma^*$ to be the concatenation of $\alpha$ and $\beta$. We also write $$\alpha^n = \underbrace{\alpha\cdots\alpha}_{n \text{ times}}\ \ \ (\alpha\in\Sigma^*,\ \ n\geqslant 1).$$ Let us reserve the symbol $\Lambda$ for the empty word. Thus
 $\alpha\Lambda = \Lambda \alpha$
 for each $\alpha \in \Sigma^*$. Denote the length of a word $\alpha$ by $|\alpha|$. We prove the following 
 \begin{lemma}\label{words}
 	Let $\alpha, \beta, \gamma \in \Sigma^*$ be words such that
 	$\alpha, \gamma \neq \Lambda$ and
 	\begin{equation}\label{ab=bc}
 		\alpha\beta = \beta\gamma.
 		\end{equation}
 	Then we have 
 	$$\alpha\beta\gamma = \alpha^n\delta,$$
 	where $\alpha = \delta\lambda$\, for some $\lambda \in \Sigma^*$ and $\delta \neq \alpha.$
 \end{lemma}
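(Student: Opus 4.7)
The plan is to reduce the claim to the classical structural description of solutions of the conjugacy equation $\alpha\beta=\beta\gamma$ in a free monoid, and then split into two cases depending on whether $\alpha=\gamma$.

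First I would prove by strong induction on $|\beta|$ that every triple $(\alpha,\beta,\gamma)$ with $\alpha,\gamma\neq\Lambda$ satisfying $\alpha\beta=\beta\gamma$ admits a decomposition $\alpha=pq$, $\gamma=qp$, $\beta=(pq)^{n}p$ for some $p,q\in\Sigma^{*}$ and some integer $n\geqslant 0$. In the base case $|\beta|<|\alpha|$, the word $\beta$ is a common prefix of $\alpha\beta$ and $\beta\gamma$, hence a prefix of $\alpha$; writing $\alpha=\beta u$ and substituting, $\beta u\beta=\beta\gamma$ yields $\gamma=u\beta$, and the choice $p=\beta$, $q=u$ gives $n=0$. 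In the inductive step $|\beta|\geqslant|\alpha|$, $\alpha$ is a prefix of $\beta$, so $\beta=\alpha\beta'$ for some $\beta'$; left-cancellation in $\alpha\beta=\beta\gamma$ gives $\alpha\beta'=\beta'\gamma$, and the induction hypothesis (possibly with a further base-case check if $\beta'=\Lambda$, in which case $\alpha=\gamma$ and the decomposition with $q=\Lambda$, $p=\alpha$ is immediate) produces the decomposition with exponent raised by one.

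Given this structure, a direct substitution yields
$$\alpha\beta\gamma \;=\; \alpha\cdot (pq)^{n}p\cdot qp \;=\; (pq)^{n+1}\cdot pq\cdot p \;=\; \alpha^{n+2}p.$$
If $q\neq\Lambda$, set $\delta=p$ and $\lambda=q$; then $\alpha=\delta\lambda$ with $|\delta|<|\alpha|$, so $\delta\neq\alpha$, and $\alpha\beta\gamma=\alpha^{n+2}\delta$, as required. If instead $q=\Lambda$ (equivalently $\alpha=\gamma$), then $p=\alpha$ and $\beta=\alpha^{n+1}$, so $\alpha\beta\gamma=\alpha^{n+3}$; taking $\delta=\Lambda$ and $\lambda=\alpha$ gives $\alpha=\delta\lambda$ and $\delta\neq\alpha$ thanks to the standing hypothesis $\alpha\neq\Lambda$.

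The only delicate point is the final case distinction, which is needed to guarantee $\delta\neq\alpha$ when the equation degenerates to $\alpha\beta=\beta\alpha$; the rest is the standard Lyndon--Sch\"utzenberger conjugacy lemma for free monoids, so no technical obstacle is expected beyond keeping track of the empty-word edge cases in the induction.
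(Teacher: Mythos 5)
Your proof is correct, and it takes a genuinely different route from the paper's. The paper inducts directly on the statement of the lemma, peeling off a single letter from the front of $\beta$ (and hence of $\alpha$) at each step, and closes with a three-way case analysis on whether $\delta$ and $\lambda$ are empty in order to verify $\delta\neq\alpha$; the invariant being propagated is the conclusion itself. You instead first establish the full Lyndon--Sch\"utzenberger conjugacy decomposition $\alpha=pq$, $\gamma=qp$, $\beta=(pq)^{n}p$ by an induction that peels a whole copy of $\alpha$ off the front of $\beta$ at each step, and then obtain $\alpha\beta\gamma=\alpha^{n+2}p$ by direct substitution; the only remaining bookkeeping is the dichotomy $q=\Lambda$ versus $q\neq\Lambda$, and in the latter case $|p|<|\alpha|$ immediately forces $\delta\neq\alpha$. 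Your approach is more conceptual: it isolates the structural fact about conjugate words that the paper's three-case analysis is implicitly circling around, and the final verification becomes a one-line length argument. The paper's version is slightly more elementary in that it never names the $p,q$ decomposition, at the cost of a longer and less transparent case analysis. Both are complete; one small note is that your separate ``further base-case check'' for $\beta'=\Lambda$ is actually redundant, since $|\beta'|=0<|\alpha|$ already falls under your base case $|\beta|<|\alpha|$, which then yields $p=\Lambda$, $q=\alpha$, $n=0$ and everything goes through.
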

\begin{proof}
	The proof is by induction on $|\beta|.$ For $|\beta| = 0$ we have $\beta = \Lambda$ and $\alpha = \gamma.$ Hence,
	$\alpha\beta\gamma = \alpha^2$ and $\lambda = \delta =\Lambda\neq \alpha.$ 
	
	Assume that the lemma is true for arbitrary $\beta'$ with $|\beta'| = m\geqslant 0.$ Let us prove it for $\beta$ with $|\beta| = m+1.$ We have $\beta = k\beta'$ for some $k\in \Sigma$ and $\beta'\in \Sigma^*$ with $ |\beta'|=m.$ From \eqref{ab=bc} we find that $\alpha k\beta' = k\beta'\gamma$. Since $\alpha \neq \Lambda$, it follows that $\alpha = k\alpha'$ for some $\alpha'\in \Sigma^*.$ Hence, $\alpha'k\beta' = \beta'\gamma.$ According to the inductive hypothesis we get
	$$\alpha'k\beta'\gamma = (\alpha'k)^n\delta,$$
	where $\alpha'k = \delta \lambda$, $n\geqslant 1$, and $\delta, \lambda\in \Sigma^*$. Therefore, we have
	$$\alpha\beta\gamma = k\alpha'k\beta'\gamma = k(\alpha'k)^n\delta = (k\alpha')^nk\delta = \alpha^n k\delta.$$
	 Consider three cases:
	 \begin{enumerate}
	 	\item $\delta = \Lambda;$
	 	\item $\delta \neq \Lambda,\ \ \lambda = \Lambda;$
	 	\item $\delta\neq \Lambda,\ \ \lambda\neq\Lambda.$
	 \end{enumerate}
 
 In the first case we have $\alpha\beta\gamma = \alpha^nk.$ Hence, if $\alpha' = \Lambda$, then $\alpha\beta\gamma = k^{n+1} = \alpha^{n+1}$. Else if $\alpha'\neq\Lambda$, then $\alpha\beta\gamma = \alpha^n k$ and $k\neq \alpha.$
 
 In the second case we have $\alpha'k = \delta.$ Since $\delta \neq \Lambda$, it follows that $\delta = \delta'k$ for some $\delta'.$ Hence, $\alpha' = \delta'$ and
 $k\delta = k\delta'k = k\alpha'k = \alpha k.$ Thus we get
 $$\alpha\beta\gamma = \alpha^n k\delta = \alpha^{n+1}k.$$ If $\alpha' = \Lambda$, then $\alpha = k$ and $\alpha\beta\gamma = \alpha^{n+2}.$ Else if $\alpha'\neq \Lambda,$ then $\alpha\beta\gamma = \alpha^{n+1}k$ and $k\neq \alpha.$
 
 In the third case for $\delta,\lambda\neq\Lambda$ we have
 $$\alpha'k = \delta\lambda.$$ Hence, $\lambda = \lambda'k$ for some $\lambda'$, and $\alpha' = \delta\lambda'.$ Thus $\alpha = k\alpha' = k\delta\lambda'$. If $\lambda' = \Lambda$, then $\alpha\beta\gamma = \alpha^nk\delta = \alpha^{n+1}$. Else if $\lambda'\neq \Lambda$, then $\alpha\beta\gamma = \alpha^n k\delta$ and $k\delta\neq \alpha.$
	The proof of the lemma is complete.
\end{proof}
The next lemma gives a bound for the lowest prime which not divided a given number.

\begin{lemma}\label{prime_no_div}
	Let $q\geqslant 2$ and
	\begin{equation}\label{rhoq}
		\rho(q) = \min\left\{p\ \  \text{prime}: p\nmid q\right\}.
	\end{equation}
	Then we have
	$$\rho(q)\ll \log q.$$
\end{lemma}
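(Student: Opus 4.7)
The plan is to prove this by a direct calculation using Chebyshev-type lower bounds on $\vartheta(x) = \sum_{p \leqslant x} \log p$. The key observation is that if $p = \rho(q)$ is the smallest prime not dividing $q$, then every prime strictly smaller than $p$ must divide $q$. Hence $q$ is divisible by the product of all primes less than $p$, so
$$q \geqslant \prod_{\substack{p' < p \\ p' \text{ prime}}} p'.$$
Taking logarithms gives $\log q \geqslant \vartheta(p-1)$.

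The second step is to invoke Chebyshev's bound in the form $\vartheta(x) \geqslant c_0 x$ for some absolute constant $c_0 > 0$ and all $x \geqslant 2$ (this is elementary and avoids any appeal to the Prime Number Theorem). This immediately yields $\log q \geqslant c_0(p-1)$, whence
$$\rho(q) = p \leqslant \frac{\log q}{c_0} + 1 \ll \log q,$$
as required. A trivial check handles the small cases $q = 2, 3$ (where $\rho(q)$ equals $3$ or $2$) so that the implied constant can be taken absolute.

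The argument is essentially self-contained and I expect no real obstacle; the only mild subtlety is ensuring that $\vartheta(p-1)$ is bounded below effectively, which is exactly what Chebyshev's theorem provides. One could alternatively appeal directly to Bertrand's postulate iteratively, but the Chebyshev estimate gives the cleanest one-line conclusion.
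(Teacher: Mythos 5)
Your proof is correct and follows essentially the same strategy as the paper: all primes below $\rho(q)$ divide $q$, so $q$ is at least the product of those primes, and a Chebyshev-type lower bound on $\vartheta$ then forces $\rho(q) \ll \log q$. The one small difference is that the paper first bounds $p_i$ (the largest prime dividing $q$ in the ``initial segment'') by $\log q$ via Chebyshev and then invokes Bertrand's postulate to pass from $p_i$ to $\rho(q) = p_{i+1} \leqslant 2p_i$, whereas you bound $\rho(q)$ directly from $\vartheta(\rho(q)-1) \leqslant \log q$, which removes the need for Bertrand. That is a mild streamlining. One small caveat on your write-up: the trivial check you need is not really for small $q$ but for small $\rho(q)$ — namely $\rho(q) = 2$ (odd $q$, so $\vartheta(p-1)=\vartheta(1)=0$ and Chebyshev's bound $\vartheta(x)\geqslant c_0 x$ for $x\geqslant 2$ does not apply) — but this case is indeed trivial since $q\geqslant 3$ forces $\log q \geqslant \log 3$, so the argument goes through as you intended.
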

\begin{proof}
	The lemma is trivial for odd $q$. So, we may assume that $q$ is even. Hence, the canonical decomposition of $q$ has the form
	$$q = p_1^{\alpha_1}\cdots p_i^{\alpha_i}q_{i+1}^{\alpha_{i+1}}\cdots q_s^{\alpha_s}\ \ (i\geqslant 1),$$
	where $s = \omega(q)$, $p_j$ denotes the $j^{\text{th}}$ prime, $\alpha_j\geqslant 1$ for $1\leqslant j\leqslant s$, and $$p_{i+1}<q_{i+1}<\cdots<q_s.$$
	Then on the one hand, by Bertrand's postulate (see \cite{Mur08} Ch. 3, Th. 3.1.9) we have
	$\rho(q) = p_{i+1} \leqslant 2p_i.$ On the other hand, by Chebyshev's bound we have
	$$q\geqslant p_1p_2\cdots p_i \geqslant e^{c p_i}\ \ (c>0).$$
	 Thus $\rho(q)\leqslant (2/c)\log q.$ The proof of the lemma is complete.	 \end{proof}
 	 
 	 Now we are going to prove Theorem \ref{MainTh}.
	 \begin{proof}
	 	Consider two cases:
	 	\begin{enumerate}
	 		\item $m\leqslant n-n^\alpha,$
	 		\item $n-n^\alpha<m<n,$
	 		\end{enumerate}
 		where $0<\alpha<1$ will be defined later. In the first case, since $$k<\kappa_+(m)\leqslant c m\log m$$ for some $c>0$, we have
 		\begin{multline*}
 			\log\dfrac{n+k}{m+k} = \log\left(1+\dfrac{n-m}{m+k}\right)\geqslant \log\left(1+\dfrac{n-m}{c m\log m}\right)\\
 			\geqslant \log\left(1+O\left( \dfrac{n^\alpha}{n\log n}\right) \right)\gg \dfrac{1}{n^{1-\alpha}\log n}.
 		\end{multline*}
 	Since 
 	$$k<\kappa_+(n)\ll n\log n,$$
 	it follows from Lemma \ref{k_log_nm_nk} that
 	\begin{equation}\label{ineq(-1)}
 		k\ll \dfrac{\sqrt{n+k}}{\log\frac{n+k}{m+k}} \ll n^{\frac{3}{2}-\alpha}(\log n)^{\frac{3}{2}}.
 		\end{equation}
 	
 	Consider the second case. If
 	\begin{equation}\label{ineq0}
 		k\leqslant n^{\frac{1}{20}},
 		\end{equation}
 	then we are done. So, we may assume that $k>n^{1/20}.$ If
 	 $m+k\leqslant n,$ then 
 	\begin{equation}\label{ineq1}
 	k<n^\alpha.
 	\end{equation}
 	Else we have $k>q:=n-m.$ If $k/q< k^\varepsilon$ for some $\varepsilon = \varepsilon(n)>0$, then
 	\begin{equation}\label{ineq2}
 		k<q^{\frac{1}{1-\varepsilon}}\leqslant n^{\frac{\alpha}{1-\varepsilon}}.
 	\end{equation}
 Suppose that $k/q>k^{\varepsilon}.$ Define the value $M$ from the inequalities
 \begin{equation*}
 	P:= \prod\limits_{\substack{p\leqslant M \\ p \nmid q}}p\leqslant \dfrac{k}{q} < \prod\limits_{\substack{p\leqslant M+1 \\ p \nmid q}}p.
 \end{equation*}
Then we obtain
\begin{equation*}
	\dfrac{k}{q}\leqslant (M+1)^{\pi(M+1)-\omega_{M+1}(q)}\leqslant (M+1)^{\pi(M)-\omega_M(q)+1},
\end{equation*}
 		where $$\omega_x(q):= \sum\limits_{\substack{p|q \\ p\leqslant x}} 1.$$
 		Hence,
 		\begin{equation}\label{piM}
 			\pi(M)-\omega_M(q)+1\geqslant \dfrac{\log\frac{k}{q}}{\log(M+1)}\geqslant \dfrac{\varepsilon \log k}{\log(M+1)}.
 			\end{equation}
 		On the other hand, we have
 		$$\prod_{p\leqslant M}p\leqslant \dfrac{k}{q}\prod_{p|q}p\leqslant k.$$
 		Hence, $M+1 \leqslant \log k(1+o(1))$. In what follows, we will choose $\varepsilon$ so that
 		\begin{equation}\label{cond_eps}
 			1 = o\left(\dfrac{\varepsilon\log k}{(\log\log k)^2} \right) 
 		\end{equation}
 	and
 	\begin{equation}\label{cond_eps2}
 		k^{1-\varepsilon} = o(k).
 	\end{equation}
 	Hence, from \eqref{piM} we find that
 	\begin{multline*}\pi(M)-\omega_M(q)\geqslant \dfrac{\varepsilon \log k}{\log\log k+o(1)}-1 \\
 		= \dfrac{\varepsilon \log k}{\log\log k} + o\left( \dfrac{\varepsilon \log k}{(\log\log k)^2}\right)= \dfrac{\varepsilon \log k}{\log\log k}(1+o(1)).
 		\end{multline*}
 	Note that $k = o(n).$ Indeed, due to Lemma \ref{words} for each $a\in\left[m,n\right)$ we have
 	\begin{equation}\label{trees}
 		t(a+q) = t(a+2q) = \cdots = t(a +\eta q),
 		\end{equation}
 	where $\eta = \lfloor k/q \rfloor.$ If $\eta\leqslant 2\rho(q)$, where $\rho(q)$ is defined in \eqref{rhoq}, then using Lemma \ref{prime_no_div} we obtain $\eta\ll \log q$ and
 	\begin{equation}\label{ineq3}
 		k\ll q\log q\ll n^\alpha \log n = o(n).
 	\end{equation}
 Otherwise, if $\eta>2\rho(q)$, then there exists $1\leqslant j\leqslant 2\rho<\eta$ such that
 $a+jq \equiv 0 \pmod \rho$ and $a+jq$ is not a prime. Hence, 
 $$a+q, a+2q, \ldots, a+\eta q$$ are composite numbers for all $a\in\left[m,n\right).$ Therefore, there are no primes in the set
 $$\bigsqcup_{a\in\left[m,n\right) }\left\{a+q, a+2q, \ldots, a+\eta q \right\} = \left[n, n+ \eta q\right).$$
 Hence, $\eta q = o(n)$ and $k = o(n)$, as desired. It follows from \eqref{trees} that
 $$E(a+q) = E(a+2q) = \cdots = E(a+\eta q),\ \  \forall a\in \left[m,n\right).$$
 Since $(P,q) = 1$, there is some $j$ such that $1\leqslant j \leqslant P\leqslant k/q$ and $$a+jq \equiv 0 \pmod P.$$ Thus we get that
 $$E(a+jq)\geqslant \omega(P) = \pi(M)-\omega_M(q)\geqslant \dfrac{\varepsilon \log k}{\log\log k}(1+o(1)).$$
Hence, using \eqref{trees} we obtain
 $$ \sum_{j=1}^\eta E(a+jq)\geqslant \dfrac{\eta\varepsilon \log k}{\log\log k}(1+o(1)).$$
 Summing this inequality over all $a\in \left[m,n\right)$, we get
 $$\eta q \dfrac{ \varepsilon \log k}{\log\log k}(1+o(1))\leqslant \sum_{a = m}^{n-1}\sum_{j=1}^\eta E(a+jq)\leqslant \sum_{j=0}^{k}E(n+j).$$
 It follows from \eqref{cond_eps2} that $k - q\eta < q\leqslant k^{1-\varepsilon} = o(k)$. Thus we conclude that
 $$\dfrac{ k \varepsilon \log k}{\log\log k}(1+o(1))\leqslant \sum_{j=0}^{k}E(n+j).$$ Now we use some ideas from \cite{Spa23}.
 Let us find an upper bound for the last sum. We have
 $$\sum_{j=0}^k E(n+j) = \sum_{j=0}^k\sum_{p|n+j}(1+E(\nu_p(n+j))) = \mathcal{E}_1+\mathcal{E}_2,$$
 	 where $\mathcal{E}_1$ denotes the contribution of $p\leqslant k$, and $\mathcal{E}_2$ denotes the contribution of the remaining $p$. For $\mathcal{E}_1$ we have
 	\begin{equation*}
 		\mathcal{E}_1 = \sum_{p \leqslant k}\sum_{\nu \leqslant \frac{\log(n+k)}{\log p}}(1+E(\nu))\sum\limits_{\substack{j=0 \\ p^\nu || n+j}}^k 1 \leqslant \sum_{p \leqslant k}\sum_{\nu \leqslant \frac{\log(n+k)}{\log p}}(1+E(\nu))\left(\dfrac{k+1}{p^\nu}+1\right), 
 	\end{equation*}
 	where $p^\nu || n$ means that $p^\nu | n$ and $p^{\nu+1}  \nmid n.$
 	Since $E(\nu)\leqslant \nu-1$, we see that the contribution from the term $(k+1)/p^\nu$ is just
 	$$(k+1) \sum_{p\leqslant k}\sum_{\nu \leqslant \frac{\log(n+k)}{\log p}}\dfrac{1+E(\nu)}{p^\nu} \ll k \sum_{p\leqslant k} \dfrac{1}{p} \ll k \log\log k.$$
 	Using the inequality (see \cite{Iud21}, Theorem 1) 
 	$$\sum_{\nu\leqslant x}E(\nu)\ll x \log\log x,\ \ (x\geqslant 2),$$
 	we get that the contribution from $1$ is bounded by
 	\begin{multline*}
 		\sum_{p \leqslant k}\sum_{\nu \leqslant \frac{\log(n+k)}{\log p}}(1+E(\nu))\ll  \log(n+k)\sum_{p\leqslant k}\dfrac{1}{\log p} + \sum_{p \leqslant k}\dfrac{\log(n+k)}{\log p}\log\log\log(n+k)\\
 		\ll \dfrac{k}{(\log k)^2}\log(n+k)\log\log\log(n+k).
 	\end{multline*}
 	Since $k = o(n)$, we obtain
 	$$\mathcal{E}_1 \ll \dfrac{k}{(\log k)^2}\log n\log\log\log n + k \log\log k.$$
 	
 	For $\mathcal{E}_2$ we have
 	\begin{equation*}
 		\mathcal{E}_2 = \sum_{j=0}^k\sum\limits_{\substack{p| n+j \\ p>k}}\left(1+E(\nu_p(n+j)) \right)\leqslant  \sum_{j=0}^k\sum\limits_{\substack{p| n+j \\ p>k}}\nu_p(n+j).
 	\end{equation*}
 	Since
 	$$n+j = \prod_{p|n+j} p^{\nu_p(n+j)} > \prod\limits_{\substack{p| n+j \\ p>k}} k^{\nu_p(n+j)},\ \  (0\leqslant j\leqslant k),$$
 	it follows that
 	$$\sum\limits_{\substack{p| n+j \\ p>k}}\nu_p(n+j) \leqslant \dfrac{\log(n+k)}{\log k}.$$
 	Hence, 
 	$$\mathcal{E}_2\leqslant \dfrac{k \log n}{\log k}(1+o(1)).$$
 	Thus we obtain
 	\begin{multline*}\dfrac{ k \varepsilon \log k}{\log\log k}(1+o(1)) \\
 		\leqslant \dfrac{k \log n}{\log k}(1+o(1)) + O(k \log\log k) + O\left( \dfrac{k}{(\log k)^2}\log n\log\log\log n\right) .
 		\end{multline*}
  Since
  $$\log\log k = o\left(\dfrac{\varepsilon \log k}{\log\log k}\right)$$
  and
  $$\dfrac{\log\log\log n}{\log k}=o(1),$$
  we get that
 	$$\dfrac{\varepsilon\log k}{\log\log k}\leqslant \dfrac{\log n}{\log k}(1+o(1)).$$
 	That is for some $2\leqslant L \leqslant (1/\varepsilon)\log n (1+o(1))$ we have
 	$$\dfrac{(\log k)^2}{\log\log k} = L.$$ Hence,
 	$$2\log\log k(1+o(1)) = \log L$$
 	and
 	\begin{equation}\label{ineq4}
 		\log k \leqslant \sqrt{\log n^{\frac{1}{\varepsilon}}\log\log n^{\frac{1}{\varepsilon}}} \left(\frac{1}{\sqrt{2}}+o(1)\right).
 		\end{equation}
 Choose $\alpha = 3/4$ and
 $\varepsilon = {2\log\log n}/{\log n}.$ Then it follows from \eqref{ineq(-1)}, \eqref{ineq0}, \eqref{ineq1}, \eqref{ineq2}, \eqref{ineq3}, and \eqref{ineq4} that
 $$k \ll n^{\frac{3}{4}}(\log n)^{\frac{3}{2}}.$$
 This completes the proof of Theorem \ref{MainTh}.
	
\end{proof}

\noindent{\bf Acknowledgments}
R.C. is partially supported by Sapienza Universit\`a di Roma (Progetti di Ateneo 2019, 2020). P.C. is partially supported by Alma Mater Studiorum University of Bologna and by PRIN project 2022 N.2022B5LF52. The work of V.I. was supported by the Theoretical Physics and Mathematics Advancement Foundation “BASIS”. The work of V.I. is also supported in part by the Moebius Contest Foundation for Young Scientists.







\end{document}